\documentclass{article}

\usepackage{amsmath,amsthm}
\usepackage[title]{appendix}
\usepackage[german]{babel}
\usepackage{csquotes}
\usepackage{drawmatrix}
\usepackage[inline]{enumitem}
\usepackage[margin=3cm]{geometry}
\usepackage{graphicx}
\usepackage{hyperref}
\usepackage{mathtools}
\usepackage{multirow}
\usepackage{tikz}

%% Macros
\renewcommand{\vec}[1]{{\mathbf{#1}}}

\newcommand{\uv}{\vec{u}}
\newcommand{\vv}{\vec{v}}
\newcommand{\wv}{\vec{w}}

\newcommand{\setbr}[2]{\{{#1}\;|\;{#2}\}}

\newcommand{\N}{\mathbf{N}}
\newcommand{\C}{\mathbf{C}}

\newcommand{\tr}{^\top}
\newcommand{\inv}{^{-1}}
\newcommand{\trace}{\mathrm{tr}}
\newcommand{\linspan}{\mathrm{span}}
\newcommand{\colsp}{\mathcal{C}}
\newcommand{\nullsp}{\mathcal{N}}
\newcommand{\Eig}{\mathrm{Eig}}
\newcommand{\Hau}{\mathrm{Hau}}
\newcommand{\mult}{\mu}

\newcommand{\defas}{\coloneqq}
\newcommand{\asdef}{\eqqcolon}
%%

%% Math environments
\newtheorem{theorem}{Satz}
\newtheorem{proposition}[theorem]{Proposition}
\newtheorem{lemma}[theorem]{Lemma}

\theoremstyle{remark}
\newtheorem{example}[theorem]{Beispiel}
\newtheorem{remark}[theorem]{Bemerkung}

\theoremstyle{definition}
\newtheorem{definition}[theorem]{Definition}

\title{Nicht-algebraische Didaktik nicht-diagonalisierbarer Matrizen \\ \smallskip \large Eine angewandte Reise zur Jordanschen Normalform}

\author{Mario Teixeira Parente}
\date{
    \normalsize
    Fakultät für Technik \\
    Hochschule Pforzheim \\
    \href{mailto:mario.parente@hs-pforzheim.de}{\texttt{mario.parente@hs-pforzheim.de}}
}

\begin{document}

\maketitle

\begingroup
\renewcommand{\abstractname}{Abstract}
\begin{abstract}
    This article offers a motivating travel guide towards the Jordan normal form, one of the highlights in courses on linear algebra or advanced mathematics.
    Its itinerary is characterized by a focus on core geometric aspects and the avoidance of algebraic tools.
    In this way, it attempts to encourage academic lecturers from more applied mathematical contexts to devote more time to one of the most exciting structures of linear algebra in their courses, rather than rushing through it, often using just a definition.
\end{abstract}
\endgroup

\begin{abstract}
    Dieser Artikel bietet einen motivierenden Reiseführer zur Jordanschen Normalform, einem der Höhepunkte in Lehrveranstaltungen zu Linearer Algebra oder Höherer Mathematik, an.
    Seine Reiseroute zeichnet sich durch die Besinnung auf geometrische Kernaspekte und den Verzicht auf algebraische Hilfsmittel aus.
    Er versucht so, Hochschuldozenten aus mathematisch angewandteren Kontexten zu ermutigen, einer der spannendsten Strukturen der Linearen Algebra mehr Zeit in ihren Lehrveranstaltungen einzuräumen, anstatt allzu hastig, und nicht selten nur mittels einer Definition, über sie hinweg zu gehen.
\end{abstract}

\medskip

\begin{center}
    \begin{minipage}{0.95\linewidth}
        \begin{small}
            \textbf{Stichwörter} ---
            Lineare Algebra, Basiswechsel, Trigonalisierung, \textit{A}-Invarianz, Hauptraum, Jordan-Kette
            
            \textbf{MSC 2020} --- 15A18, 15A20, 97H60
        \end{small}
    \end{minipage}
\end{center}

\section{Einleitung}

Das Studium der \enquote{Geometrie} einer Matrix\footnote{Aufgrund des mathematisch angewandten Kontexts verwenden wir in diesem Artikel Matrizen aus~$\C^{n \times n}$ bzgl. der Standardbasis von~$\C^n$ anstatt koordinatenfreie Endomorphismen auf einem endlichdimensionalen Vektorraum über einem algebraisch abgeschlossenen Körper.} $A \in \C^{n \times n}$ findet seinen Höhepunkt in der Jordanschen Normalform von~$A$.
Denn mit ihr steht ein Basiswechsel zur Verfügung, mit dem man~$A$ durch eine \enquote{möglichst diagonale} Matrix darstellen und somit Einsicht in ihr Abbildungsverhalten auf einfache geometrische Figuren wie Kugeln oder Würfel gewinnen kann.
In Anwendungen, die ein z.\,B. mechanisches Phänomen durch ein System linearer Differentialgleichungen (mit konstanten Koeffizienten) modellieren, kann man dadurch beispielsweise dessen unabhängige Variablen \enquote{entkoppeln} und somit das Verständnis des Modells verbessern.

Der Weg zur Jordanschen Normalform ist in einem Mathematik-Studium üblicher\-weise gepflastert mit algebraischen Hilfsmitteln und Begriffen wie charakteristischen Polynomen (definiert mittels Determinanten) oder Minimalpolynomen sowie mit der Untersuchung nilpotenter Matrizen; interessante und notwendige Konzepte, die allerdings allzu leicht von geometrischen Kernaspekten, auf die es uns hier ankommen soll, ablenken.
Mathematische Lehrveranstaltungen in mathematiknahen Studiengängen (wie Informatik, Physik) oder Ingenieur-Studiengängen räumen ihr hingegen oft allgemein wenig Zeit ein und führen sie sogar teilweise, wenn überhaupt, nur mittels einer Definition ein; eine Motivation bleibt dabei aus.

Als Gegenmaßnahme schlägt dieser Artikel eine motivierte Reiseroute zur Jordanschen Normalform vor, die (bis auf den Nachweis der Existenz von Eigenwerten) ohne algebraische Hilfsmittel auskommt und sich über natürliche Etappen dem Reiseziel nähert.
Der Reiseführer legt Wert darauf, dass die Etappenziele der Reise stets klar und motiviert sowie ohne zu steile Anstiege erreichbar sind.
Am Reiseziel angekommen sollen Studierende vor allem in der Lage sein, anschaulich und überzeugend zu erklären, wie Jordan-Blöcke und die Einsen auf ihren Nebendiagonalen entstehen.
Ihre Anschauung soll dabei insbesondere durch Konzentration auf die geometrische Sichtweise und der damit einhergehenden Intuition geschult werden.

Die Zielgruppe des Artikels sind Dozenten mathematischer Grundlagenlehrveran\-staltungen in Ma\-thematik- und mathematiknahen Studiengängen an Hochschulen für angewandte Wissenschaften sowie mathematiknahen und Ingenieur-Studiengängen an Universitäten.
Doch selbst für Lehrende in universitären Mathematik-Studiengängen hält der Text womöglich die ein oder andere Inspiration bereit.

Der Autor betont, dass keine der hier formulierten mathematischen Aussagen unbekannt ist.
Der Mehrwert des Artikels liegt demnach in der Darlegung eines für den Autor nach bestem Wissen neuen didaktischen Erzählstrangs, der den Zugang zur Jordanschen Normalform und ihren geometrischen Hintergründen für Studierende aus mathematisch angewandteren Kontexten erleichtert.

Kürzere Beweise zu mathematischen Aussagen sind im Text untergebracht.
Die restlichen Beweise sind in Anhang~\ref{sec:app_proofs} zu finden.
Darunter sind auch solche, die in Lehrveranstaltungen durchaus übersprungen werden können, ohne dabei die erfolgreiche Vermittlung grundlegender Gedanken und Konzepte zu gefährden.

\section{Reise}

Um unsere Reise motivieren zu können, müssen wir zunächst einen Begriff einführen.

\begin{definition}[Ähnlichkeit]
    Eine Matrix~$A \in \C^{n \times n}$ heißt \textit{ähnlich} zu einer Matrix~$B \in \C^{n \times n}$, falls eine invertierbare Matrix~$S \in \C^{n \times n}$ existiert, sodass $A = S B S\inv$.
\end{definition}

\begin{remark}
    Ähnliche Matrizen unterscheiden sich demnach lediglich durch einen Basiswechsel.
    Sie stellen also, bis auf diesen Basiswechsel, die gleiche lineare Abbildung dar.
\end{remark}

Eine Matrix heißt \textit{diagonalisierbar}, falls sie ähnlich zu einer Diagonalmatrix ist.
Ist $A \in \C^{n \times n}$ diagonalisierbar, existiert also eine invertierbare Matrix $V \in \C^{n \times n}$, sodass
\begin{equation}
    A = V \Lambda V\inv
\end{equation}
für eine Diagonalmatrix $\Lambda \in \C^{n \times n}$.
Es gilt somit insbesondere
\begin{equation}
    AV = V \Lambda \quad \text{bzw.} \quad A\vv_i = \lambda_i \vv_i \quad \text{für $i = 1, \ldots, n$},
\end{equation}
wobei die Vektoren $\vv_i \in \C^n$, d.\,h. die Spalten von~$V$, eine Basis von~$\C^n$ bilden.
Es ist diese Gleichung, die uns die Geometrie einer diagonalisierbaren Matrix verstehen lässt.
In der Tat, für jeden Basisvektor~$\vv_i$ können wir feststellen, ob er durch eine Abbildung von~$A$ (abgesehen von seiner Richtung) lediglich gestaucht ($|\lambda_i| < 1$), gestreckt ($|\lambda_i| > 1$) oder nicht verändert ($|\lambda_i| = 1$) wird.
Somit können wir für \textit{jeden} Vektor $\vv \in \C^n$ nachvollziehen, in welche Richtungen er durch eine Abbildung von~$A$ geometrisch verändert wird.
Mit
\begin{equation}
    \vv = \alpha_1 \vv_1 + \cdots + \alpha_n \vv_n \quad \text{für $\alpha_1, \ldots, \alpha_n \in \C$}
\end{equation}
gilt nämlich
\begin{equation}
    A\vv = \alpha_1 A\vv_1 + \cdots + \alpha_n A\vv_n = \lambda_1 (\alpha_1 \vv_1) + \cdots + \lambda_n (\alpha_n \vv_n).
\end{equation}

Die Motivation unserer Reise ist nun die bedauerliche Feststellung, dass es Matrizen gibt, die nicht diagonalisierbar sind.

\begin{example}
    Die Matrix
    \begin{equation}
        A \defas \begin{pmatrix}
            1 & 1  \\
            0 & 1
        \end{pmatrix}
    \end{equation}
    mit ihrem einzigen Eigenwert~$1$ ist nicht diagonalisierbar.
    Denn wäre~$A$ diagonalisierbar, dann würde eine invertierbare Matrix~$S \in \C^{2 \times 2}$ existieren, sodass
    \begin{equation}
        A = SIS\inv = SS\inv = I \not= A.
    \end{equation}
\end{example}

Um die Geometrie einer nicht-diagonalisierbaren Matrix~$A \in \C^{n \times n}$ dennoch bestmöglich zu verstehen, müssen wir an diesem Ausgangspunkt unserer Reise die entscheidende Frage stellen, ob es eine Basis von~$\C^n$ gibt, mit der~$A$ durch eine \enquote{möglichst diagonale} Matrix, d.\,h. eine Matrix, die abseits der Diagonale möglichst viele Nulleinträge besitzt, dargestellt werden kann.

Unsere Reise soll über vier Etappen zum Ziel, d.\,h. zu einer ausgewogenen Beantwortung der eben gestellten Frage, führen (vgl. Abb.~\ref{fig:journey}):
\begin{enumerate}
    \item Trigonalisierung
    \item Blockdiagonalisierung
    \item Blockweise Trigonalisierung
    \item Jordanisierung
\end{enumerate}

\begin{figure}
    \centering
    \includegraphics[width=0.975\linewidth]{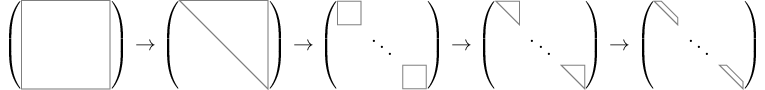}
    \caption{Unsere Reiseroute: Trigonalisierung $\to$ Blockdiagonalisierung $\to$ Blockweise Trigonalisierung $\to$ Jordanisierung.}
    \label{fig:journey}
\end{figure}

\subsection*{Reisevorbereitung}

Bevor wir die Reise antreten, formulieren wir Begriffe und bekannte Aussagen, die wir währenddessen benötigen und deshalb voraussetzen möchten.

\begin{theorem}[Basisergänzungssatz für~$\C^n$]
    \label{thm:basis_compl}
    Sei $\mathcal{M} \subset \C^n$ linear unabhängig.
    Dann existiert eine Basis~$\mathcal{B}$ von~$\C^n$ mit $\mathcal{B} \supseteq \mathcal{M}$.
\end{theorem}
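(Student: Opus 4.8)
Der Plan ist, die linear unabhängige Menge~$\mathcal{M}$ gierig mit Vektoren der Standardbasis $\vec{e}_1, \dots, \vec{e}_n$ von~$\C^n$ zu einer Basis aufzufüllen. Zunächst würde ich festhalten, dass~$\mathcal{M}$ endlich ist, denn eine linear unabhängige Teilmenge von~$\C^n$ besitzt höchstens~$n$ Elemente; diese Tatsache (bzw. das ihr zugrunde liegende Austauschlemma) setze ich als bekannt voraus. Ich setze $\mathcal{M}_0 \defas \mathcal{M}$ und konstruiere induktiv eine aufsteigende Kette $\mathcal{M}_0 \subseteq \mathcal{M}_1 \subseteq \cdots$ linear unabhängiger Mengen, die alle~$\mathcal{M}$ enthalten.

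Im Induktionsschritt liegt eine linear unabhängige Menge $\mathcal{M}_i \supseteq \mathcal{M}$ vor. Erzeugt~$\mathcal{M}_i$ bereits~$\C^n$, so bin ich fertig und wähle $\mathcal{B} \defas \mathcal{M}_i$. Andernfalls ist $\linspan \mathcal{M}_i \neq \C^n$; da die~$\vec{e}_j$ zusammen~$\C^n$ aufspannen, kann dann nicht jeder Standardbasisvektor in~$\linspan \mathcal{M}_i$ liegen, und ich wähle ein~$j$ mit $\vec{e}_j \notin \linspan \mathcal{M}_i$ sowie $\mathcal{M}_{i+1} \defas \mathcal{M}_i \cup \{\vec{e}_j\}$. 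Der kleine, aber entscheidende Baustein ist hier die Beobachtung, dass das Hinzufügen eines Vektors außerhalb der linearen Hülle die lineare Unabhängigkeit erhält: In einer Darstellung des Nullvektors als Linearkombination über~$\mathcal{M}_{i+1}$ muss der Koeffizient vor~$\vec{e}_j$ verschwinden (andernfalls könnte man~$\vec{e}_j$ als Linearkombination der übrigen Vektoren schreiben, also $\vec{e}_j \in \linspan \mathcal{M}_i$), und anschließend erzwingt die lineare Unabhängigkeit von~$\mathcal{M}_i$ das Verschwinden aller übrigen Koeffizienten.

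Da $\lvert \mathcal{M}_{i+1} \rvert = \lvert \mathcal{M}_i \rvert + 1$ ist und linear unabhängige Teilmengen von~$\C^n$ höchstens~$n$ Elemente haben, bricht das Verfahren nach endlich vielen Schritten ab. Die dann erreichte Menge~$\mathcal{B}$ ist linear unabhängig \emph{und} Erzeugendensystem von~$\C^n$, also eine Basis, und nach Konstruktion gilt $\mathcal{B} \supseteq \mathcal{M}$. Die Hauptschwierigkeit sehe ich weniger in einem einzelnen Schritt als im sauberen Zusammenspiel zweier Bausteine -- dem Erhalt der linearen Unabhängigkeit beim Anfügen \enquote{neuer} Basisvektoren und dem Terminierungsargument über die Schranke~$n$ --; beide beruhen auf Ergebnissen (Austauschlemma, Gleichmächtigkeit von Basen), die in Grundvorlesungen ohnehin vor diesem Satz zur Verfügung stehen und die ich daher verwenden würde.
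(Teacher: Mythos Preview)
Dein Argument ist korrekt: das gierige Auffüllen mit Standardbasisvektoren, der Nachweis, dass lineare Unabhängigkeit beim Hinzufügen eines Vektors außerhalb der Hülle erhalten bleibt, und das Terminierungsargument über die Schranke~$n$ sind sauber dargestellt und greifen wie gewünscht ineinander.

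Zum Vergleich mit dem Artikel: Der Artikel beweist diesen Satz gar nicht. Er wird im Abschnitt \enquote{Reisevorbereitung} ausdrücklich als bekannte Aussage aufgeführt, die \enquote{wir währenddessen benötigen und deshalb voraussetzen möchten}; ein Beweis findet sich weder im Haupttext noch im Anhang. Dein Zugang liefert also mehr als der Artikel verlangt. Inhaltlich ist dein Beweis der klassische Standardzugang, wie er in jeder Grundvorlesung zu erwarten wäre, und passt damit gut zur Rolle des Satzes als vorausgesetztes Hilfsmittel.
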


\begin{definition}[Direkte Summe]
    Seien $U,W$ Untervektorräume von~$\C^n$.
    Die Summe
    \begin{equation*}
        U + W \defas \setbr{\uv + \wv}{\uv \in U, \wv \in W}
    \end{equation*}
    heißt \textit{direkt}, falls $U \cap W = \{\vec{0}\}$.
    In Zeichen: $U + W = U \oplus W$.
\end{definition}

\begin{definition}[Spaltenraum, Nullraum]
    Sei $A \in \C^{m \times n}$.
    Die Menge
    \begin{equation}
        \colsp(A) \defas \setbr{A\vv}{\vv \in \C^n} \subseteq \C^m
    \end{equation}
    heißt \textit{Spaltenraum von~$A$}.
    Die Menge
    \begin{equation}
        \nullsp(A) \defas \setbr{\vv}{A\vv = \vec{0}} \subseteq \C^n
    \end{equation}
    heißt \textit{Nullraum von~$A$}.
\end{definition}

\begin{remark}
    Der Spaltenraum und Nullraum einer Matrix sind Untervektorräume von~$\C^n$.
\end{remark}

\begin{theorem}[Rangsatz]
    Sei $A \in \C^{m \times n}$.
    Dann gilt
    \begin{equation}
        \dim \colsp(A) + \dim \nullsp(A\tr) = m
        \quad \text{und} \quad
        \dim \colsp(A\tr) + \dim \nullsp(A) = n.
    \end{equation}
\end{theorem}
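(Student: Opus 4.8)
Der Plan ist, die beiden behaupteten Gleichungen zunächst auf eine einzige zurückzuführen und diese dann aus zwei Standardbausteinen zusammenzusetzen, die zu den unter \enquote{Reisevorbereitung} bereitgestellten Hilfsmitteln passen. Zuerst halte ich fest, dass es genügt, die zweite Gleichung $\dim \colsp(A\tr) + \dim \nullsp(A) = n$ für alle $A \in \C^{m \times n}$ (mit beliebigen $m, n$) zu beweisen: Wendet man sie auf $A\tr \in \C^{n \times m}$ anstelle von~$A$ an und benutzt $(A\tr)\tr = A$, so erhält man unmittelbar $\dim \colsp(A) + \dim \nullsp(A\tr) = m$, also die erste Gleichung. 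Zu beweisen bleibt somit nur eine einzige solche Gleichung.

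Diese würde ich in zwei Teile zerlegen: (i) die Dimensionsformel $\dim \colsp(A) + \dim \nullsp(A) = n$ und (ii) die Gleichheit von Zeilen- und Spaltenrang, $\dim \colsp(A\tr) = \dim \colsp(A)$; Einsetzen von~(ii) in~(i) liefert gerade $\dim \colsp(A\tr) + \dim \nullsp(A) = n$. Für~(i) wähle ich eine Basis $\vec b_1, \dots, \vec b_k$ von $\nullsp(A)$, ergänze sie mit dem Basisergänzungssatz (Satz~\ref{thm:basis_compl}) zu einer Basis $\vec b_1, \dots, \vec b_k, \vec b_{k+1}, \dots, \vec b_n$ von~$\C^n$ und zeige, dass $A\vec b_{k+1}, \dots, A\vec b_n$ eine Basis von $\colsp(A)$ bilden. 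Dass sie $\colsp(A)$ erzeugen, ist klar, da $A\vec b_i = \vec 0$ für $i \le k$. Für die lineare Unabhängigkeit folgt aus $\sum_{i>k} \beta_i A\vec b_i = \vec 0$ zunächst $\sum_{i>k} \beta_i \vec b_i \in \nullsp(A) = \linspan\{\vec b_1, \dots, \vec b_k\}$; da dieser Vektor zugleich in $\linspan\{\vec b_{k+1}, \dots, \vec b_n\}$ liegt und die Summe $\linspan\{\vec b_1,\dots,\vec b_k\} + \linspan\{\vec b_{k+1},\dots,\vec b_n\}$ direkt ist, muss $\sum_{i>k} \beta_i \vec b_i = \vec 0$ und damit $\beta_i = 0$ für alle $i > k$ gelten. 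Abzählen der Basisvektoren ergibt dann $\dim \colsp(A) = n - k$.

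Für~(ii) verwende ich eine Rangfaktorisierung: Mit $r \defas \dim \colsp(A)$ fasse ich eine Basis von $\colsp(A)$ als die Spalten einer Matrix $C \in \C^{m \times r}$ auf; jede Spalte von~$A$ ist dann eine eindeutige Linearkombination dieser $r$ Spalten, und die dabei auftretenden Koeffizienten bilden eine Matrix $R \in \C^{r \times n}$ mit $A = CR$. Liest man $A = CR$ zeilenweise, so ist jede Zeile von~$A$ eine Linearkombination der $r$ Zeilen von~$R$, also $\dim \colsp(A\tr) \le r = \dim \colsp(A)$. Anwenden dieser Ungleichung auf $A\tr$ liefert die umgekehrte Abschätzung und damit die Gleichheit. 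Zusammen mit~(i) und der Rückführung aus dem ersten Absatz sind beide Gleichungen bewiesen.

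Als Hauptschwierigkeit erwarte ich Schritt~(ii): Das Basisergänzungsargument für~(i) ist Routine, während $\dim \colsp(A\tr) = \dim \colsp(A)$ die einzige wirklich nicht offensichtliche Tatsache ist --- im Wesentlichen die einzige Stelle, an der die beiden Seiten einer Rangaussage miteinander verknüpft statt nur abgezählt werden. Die Faktorisierung $A = CR$ ist dafür der sauberste Weg bei den verfügbaren Hilfsmitteln; alternativ ließe sie sich auch schlicht zitieren, da der Rangsatz ohnehin zu den bewusst vorausgesetzten Resultaten dieses Artikels gehört.
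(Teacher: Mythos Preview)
Dein Beweisvorschlag ist korrekt und sauber aufgebaut; die Reduktion auf eine Gleichung, das Basisergänzungsargument für~(i) und die Rangfaktorisierung $A = CR$ für~(ii) sind alle gültig und elementar. Allerdings gibt es hier nichts zu vergleichen: Der Artikel beweist den Rangsatz gar nicht. Er steht im Abschnitt \enquote{Reisevorbereitung} ausdrücklich unter den \enquote{bekannten Aussagen, die wir währenddessen benötigen und deshalb voraussetzen möchten}, und auch die anschließende Bemerkung $\dim \colsp(A\tr) = \dim \colsp(A)$ wird ohne Begründung zitiert. Das hast du selbst richtig erkannt (\enquote{da der Rangsatz ohnehin zu den bewusst vorausgesetzten Resultaten dieses Artikels gehört}); dein Vorschlag liefert also mehr, als der Artikel an dieser Stelle verlangt, und wäre als ergänzender Beweis einer vorausgesetzten Aussage einwandfrei.
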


\begin{remark}
    Da $\dim \colsp(A\tr) = \dim \colsp(A)$, gilt insbesondere
    \begin{equation}
        \dim \colsp(A) + \dim \nullsp(A) = n.
    \end{equation}
\end{remark}

\begin{definition}[Eigenwert]
    Sei~$A \in \C^{n \times n}$.
    Ein Skalar~$\lambda \in \C$ heißt \textit{Eigenwert von~$A$}, falls ein Vektor~$\vv \in \C^n$ mit $\vv \not= \vec{0}$ existiert, sodass
    \begin{equation}
        \label{eq:eig}
        A\vv = \lambda\vv.
    \end{equation}
    In diesem Fall heißt~$\vv$ ein \textit{Eigenvektor zu~$\lambda$}.
    Die Menge aller Eigenwerte von~$A$, d.\,h.
    \begin{equation}
        \sigma(A) \defas \setbr{\lambda \in \C}{\text{$\lambda$ Eigenwert von~$A$}},
    \end{equation}
    heißt \textit{Spektrum von~$A$}.
    Die Menge
    \begin{equation}
        \Eig_A(\lambda) \defas \nullsp(A - \lambda I)
    \end{equation}
    heißt \textit{Eigenraum zu~$\lambda$}.
\end{definition}

Eine Matrix~$A \in \C^{n \times n}$ ist also genau dann nicht diagonalisierbar, wenn die Eigenräume zu ihren Eigenwerten nicht genügend linear unabhängige Eigenvektoren zur Bildung einer Basis von~$\C^n$ besitzen, d.\,h. falls
\begin{equation}
    \sum_{\lambda \in \sigma(A)} \dim \Eig_A(\lambda) < n.
\end{equation}

\begin{theorem}
    \label{thm:ev_exist}
    Jede Matrix $A \in \C^{n \times n}$ besitzt einen Eigenwert, d.\,h. $\sigma(A) \not= \emptyset$.
\end{theorem}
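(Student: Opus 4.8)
Wie in der Einleitung bereits angekündigt, ist dieser Existenzbeweis die einzige Station unserer Reise, an der wir ein algebraisch-analytisches Hilfsmittel heranziehen müssen, nämlich den Fundamentalsatz der Algebra; Determinanten, charakteristische oder Minimalpolynome sowie die Theorie nilpotenter Matrizen wollen wir jedoch weiterhin meiden.

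Der Plan ist, mit einem beliebigen~$\vv \in \C^n$, $\vv \not= \vec{0}$, zu starten und die~$n+1$ Vektoren
\[
    \vv,\ A\vv,\ A^2\vv,\ \ldots,\ A^n\vv
\]
zu betrachten. Als~$n+1$ Vektoren im~$n$-dimensionalen Raum~$\C^n$ sind sie linear abhängig; es existieren also nicht sämtlich verschwindende Skalare~$c_0, \ldots, c_n \in \C$ mit~$c_0 \vv + c_1 A\vv + \cdots + c_n A^n\vv = \vec{0}$, d.\,h.~$p(A)\vv = \vec{0}$ für das Polynom~$p(t) \defas c_0 + c_1 t + \cdots + c_n t^n$. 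Dieses~$p$ ist nicht das Nullpolynom, und es gilt sogar~$\deg p \ge 1$: Hätte~$p$ den Grad~$0$, so folgte aus~$c_0 \vv = \vec{0}$ mit~$c_0 \not= 0$ der Widerspruch~$\vv = \vec{0}$.

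Als Nächstes würde ich den Fundamentalsatz der Algebra anwenden, um~$p$ über~$\C$ in Linearfaktoren zu zerlegen,~$p(t) = c\,(t - \mu_1)\cdots(t - \mu_k)$ mit~$k = \deg p \ge 1$, $c \in \C \setminus \{0\}$ und~$\mu_1, \ldots, \mu_k \in \C$, und anschließend~$A$ einzusetzen:~$(A - \mu_1 I)\cdots(A - \mu_k I)\,\vv = \vec{0}$. Den --- zugegeben einfachen, aber zur geometrischen Leitlinie passenden --- Kern bildet dann die folgende Beobachtung: Setzt man~$\wv_0 \defas \vv$ und~$\wv_j \defas (A - \mu_j I)\wv_{j-1}$ für~$j = 1, \ldots, k$, so ist~$\wv_k = \vec{0}$, während~$\wv_0 \not= \vec{0}$; folglich gibt es einen kleinsten Index~$j$ mit~$\wv_j = \vec{0}$, und für diesen gilt~$\wv_{j-1} \not= \vec{0}$ sowie~$(A - \mu_j I)\wv_{j-1} = \vec{0}$, also~$A\wv_{j-1} = \mu_j \wv_{j-1}$. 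Damit ist~$\mu_j$ ein Eigenwert von~$A$ (mit Eigenvektor~$\wv_{j-1}$), insbesondere~$\sigma(A) \not= \emptyset$.

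Eine echte Hürde erwarte ich nicht: Der gesamte Schwierigkeitsanteil steckt im bewusst nicht bewiesenen Fundamentalsatz der Algebra, während alle übrigen Schritte lediglich die lineare Abhängigkeit von~$n+1$ Vektoren in~$\C^n$ und das Rechnen mit Polynomen in~$A$ benötigen. Als Alternative käme ein funktionalanalytischer Zugang über den Satz von Liouville in Betracht (angewandt auf die matrixwertige Funktion~$\lambda \mapsto (A - \lambda I)\inv$ unter der Annahme, dass~$A$ keinen Eigenwert besitzt), der den Fundamentalsatz vermeiden würde; im angewandten Rahmen dieses Artikels erscheint der obige Polynom-Zugang jedoch niederschwelliger.
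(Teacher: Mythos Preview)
Your proof is correct and follows essentially the same route as the paper's own argument (adapted from Axler): start with a nonzero vector, use linear dependence of $\vv, A\vv, \ldots, A^n\vv$ to obtain a nonconstant polynomial annihilating~$\vv$, factor it via the Fundamentalsatz der Algebra, and then locate a linear factor that kills a nonzero vector. Your explicit construction of the sequence~$\wv_j$ to pinpoint the first vanishing step is a slightly more detailed rendering of what the paper phrases as ``es muss ein~$\wv \not= \vec{0}$ und~$\ell$ geben mit $(A - \lambda_\ell I)\wv = \vec{0}$'', but the substance is identical.
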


Ein determinantenfreier Beweis von Satz~\ref{thm:ev_exist}, der als einzige Ausnahme dieses Artikels ein algebraisches Hilfsmittel (Fundamentalsatz der Algebra) verwendet, ist in Anhang~\ref{ssec:app_proof_thm_ev_exist} zu finden.

\begin{proposition}
    \label{prop:eig_simil}
    Ähnliche Matrizen besitzen die gleichen Eigenwerte.
\end{proposition}

\subsection{Trigonalisierung}

Unser erstes Etappenziel ist der Nachweis der Trigonalisierbarkeit einer jeden Matrix $A \in \C^{n \times n}$.
Wir möchten also zeigen, dass~$A$ ähnlich zu einer (oberen) Dreiecksmatrix~$U \in \C^{n \times n}$ ist.
Dafür müssen wir einen Basiswechsel mittels einer invertierbaren Matrix~$V \in \C^{n \times n}$ finden, sodass
\begin{equation}
    \label{eq:schur_decomp}
    A = V U V\inv.
\end{equation}
Damit wären wir dem Gesamtziel unserer Reise, einen Basiswechsel zu einer \enquote{möglichst diagonalen} Matrix zu finden, bereits ein gutes Stück nähergekommen, da unterhalb der Diagonale ausschließlich Nulleinträge entstehen würden.

\begin{theorem}
    Jede Matrix $A \in \C^{n \times n}$ ist trigonalisierbar.
\end{theorem}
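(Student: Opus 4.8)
Der Plan ist ein Induktionsbeweis über die Dimension~$n$, dessen einziger nicht-elementarer Baustein die Existenz eines Eigenwerts (Satz~\ref{thm:ev_exist}) ist. Für $n = 1$ ist jede Matrix bereits (obere) Dreiecksmatrix, der Induktionsanfang also trivial.

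Für den Induktionsschritt setze ich voraus, dass jede Matrix aus~$\C^{(n-1) \times (n-1)}$ trigonalisierbar ist, und betrachte ein $A \in \C^{n \times n}$. Nach Satz~\ref{thm:ev_exist} besitzt~$A$ einen Eigenwert~$\lambda_1$ und dazu einen Eigenvektor $\vv_1 \not= \vec{0}$. Mit dem Basisergänzungssatz (Satz~\ref{thm:basis_compl}) ergänze ich~$\{\vv_1\}$ zu einer Basis $\{\vv_1, \ldots, \vv_n\}$ von~$\C^n$ und setze diese als Spalten einer invertierbaren Matrix~$V_1$ zusammen. Wegen $A\vv_1 = \lambda_1 \vv_1$ ist die erste Spalte von~$V_1\inv A V_1$ genau $\lambda_1 \vec{e}_1$ (mit dem ersten Standardbasisvektor~$\vec{e}_1$), sodass
\begin{equation*}
    V_1\inv A V_1 = \begin{pmatrix} \lambda_1 & \vec{b}\tr \\ \vec{0} & A' \end{pmatrix}
\end{equation*}
mit einem $\vec{b} \in \C^{n-1}$ und einem $A' \in \C^{(n-1) \times (n-1)}$.

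Nach Induktionsvoraussetzung gibt es eine invertierbare Matrix $W \in \C^{(n-1) \times (n-1)}$, sodass $W\inv A' W \asdef U'$ obere Dreiecksmatrix ist. Für die ebenfalls invertierbare Blockmatrix $V_2 \defas \begin{pmatrix} 1 & \vec{0}\tr \\ \vec{0} & W \end{pmatrix}$ rechnet man dann
\begin{equation*}
    V_2\inv \left( V_1\inv A V_1 \right) V_2 = \begin{pmatrix} \lambda_1 & \vec{b}\tr W \\ \vec{0} & U' \end{pmatrix}
\end{equation*}
nach, was wieder eine obere Dreiecksmatrix ist. Also leistet $V \defas V_1 V_2$ das Gewünschte: $A = V U V\inv$ mit der oberen Dreiecksmatrix $U \defas V_2\inv V_1\inv A V_1 V_2$.

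Die eigentliche Hürde ist die Reduktion auf die Dimension~$n-1$: Man muss sich klarmachen, dass der erste Basiswechsel dank $A\vv_1 = \lambda_1\vv_1$ wirklich die angegebene Blockgestalt mit einem vollwertigen $(n-1) \times (n-1)$-Block~$A'$ liefert, und dass das Konjugieren mit der blockdiagonalen Matrix~$V_2$ die Nullen in der ersten Spalte sowie den Eintrag~$\lambda_1$ unberührt lässt und allein den Block~$A'$ — und zwar in genau der von der Induktionsvoraussetzung bereitgestellten Weise — trianguliert. Alles Übrige ist Routinerechnung mit Blockmatrizen.
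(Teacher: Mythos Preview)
Dein Vorschlag ist korrekt und folgt im Wesentlichen demselben Weg wie der Beweis im Artikel: Induktion über die Dimension, Wahl eines Eigenvektors mittels Satz~\ref{thm:ev_exist}, Basisergänzung nach Satz~\ref{thm:basis_compl} zur Erzeugung der Blockgestalt und anschließende Anwendung der Induktionsvoraussetzung auf den kleineren Block, kombiniert durch Konjugation mit einer blockdiagonalen Matrix. Die Unterschiede sind rein notationell (etwa $n-1 \to n$ statt $n \to n+1$ und die explizite Angabe von $\vec{b}\tr W$ im oberen rechten Block).
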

\begin{proof}
    Wir beweisen die Aussage per Induktion über~$n$.
    Der Induktionsanfang~$n=1$ ist dabei trivial.
    
    Sei also $A \in \C^{(n+1) \times (n+1)}$.
    Wir wählen zunächst einen Eigenwert $\lambda \in \C$ von~$A$ (Satz~\ref{thm:ev_exist}) und einen zugehörigen Eigenvektor $\vv \not= \vec{0}$, d.\,h. $A\vv = \lambda\vv$.
    Ergänzen wir~$\{\vv\}$ mit Vektoren $\vv_1, \ldots, \vv_n \in \C^{n+1}$ zu einer Basis von~$\C^{n+1}$ (Satz~\ref{thm:basis_compl}) und definieren
    \begin{equation}
        \label{eq:schur_basis_compl}
        V' \defas
        \begin{pmatrix}
            |     &        & |     \\
            \vv_1 & \cdots & \vv_n \\
            |     &        & |
        \end{pmatrix}
        \quad \text{und} \quad
        V \defas
        \begin{pmatrix}
            |   & \multirow{3}{*}{\drawmatrix{$V'$}} \\
            \vv &                                    \\
            |   &                                    \\
        \end{pmatrix},
    \end{equation}
    dann gilt
    \begin{equation}
        AV =
        \begin{pmatrix}
            |          & \multirow{3}{*}{\drawmatrix{$AV'$}} \\
            \lambda\vv &                                     \\
            |          &                                     \\
        \end{pmatrix}
        =
        V
        \begin{pmatrix}
            \lambda & *  \\
            0       & B
        \end{pmatrix}
    \end{equation}
    für eine Matrix $B \in \C^{n \times n}$, also
    \begin{equation}
        A = V
        \begin{pmatrix}
            \lambda & *  \\
            0       & B
        \end{pmatrix}
        V\inv.
    \end{equation}
    Da~$B$ trigonalisierbar ist (Induktionsannahme), gibt es eine obere Dreiecksmatrix $U \in \C^{n \times n}$ und eine invertierbare Matrix $W \in \C^{n \times n}$, sodass $B = W U W\inv$.
    Damit gilt schließlich
    \begin{equation}
        A = V
        \begin{pmatrix}
            \lambda & *          \\
            0       & W U W\inv
        \end{pmatrix}
        V\inv
        =
        \underbrace{
            V
            \begin{pmatrix}
                1 & 0  \\
                0 & W
            \end{pmatrix}
        }_{\asdef \tilde{V}}
        \underbrace{
            \begin{pmatrix}
                \lambda & *  \\
                0       & U
            \end{pmatrix}
        }_{\asdef \tilde{U}}
        \underbrace{
            \begin{pmatrix}
                1 & 0      \\
                0 & W\inv
            \end{pmatrix}
            V\inv
        }_{= \tilde{V}\inv}
        = \tilde{V} \tilde{U} \tilde{V}\inv
    \end{equation}
    mit der oberen Dreiecksmatrix~$\tilde{U} \in \C^{n \times n}$ und der invertierbaren Matrix $\tilde{V} \in \C^{n \times n}$.
\end{proof}

Eine Zerlegung wie in~$\eqref{eq:schur_decomp}$ heißt eine \textit{Schur-Zerlegung von~$A$}; sie ist nicht eindeutig.
Die Matrix~$U$ einer Schur-Zerlegung heißt eine \textit{Schursche Normalform von~$A$}.

\begin{remark}
    \label{rem:trig_eig}
    Da ähnliche Matrizen die gleichen Eigenwerte besitzen (Prop.~\ref{prop:eig_simil}) und die Eigenwerte einer Dreiecksmatrix auf ihrer Diagonale stehen, bilden die Diagonaleinträge einer Schurschen Normalform von~$A$ das Spektrum von~$A$.
\end{remark}

Das Etappenziel ist hiermit erreicht.
Zur Motivation für die Fortsetzung unserer Reise stellen wir fest, dass die Basis in~\eqref{eq:schur_basis_compl} zwar mit passenden, aber \emph{beliebigen} Basisvektoren ergänzt wurde.
Es besteht also noch Spielraum für die Wahl einer \enquote{besseren} Basis, die womöglich einen Basiswechsel zu einer, im Vergleich zu einer Dreiecksmatrix, \enquote{noch diagonaleren} Matrix erlaubt.

\subsection{Blockdiagonalisierung}

Auf unserer nächsten Etappe machen wir uns auf die Suche nach einer Basis, die einen Basiswechsel zu einer \textit{Blockdiagonalmatrix} erlaubt.
Einerseits wäre das Auffinden einer solchen Basis ein Fortschritt für unsere Reise, da somit Nulleinträge auch überhalb der Diagonale entstehen würden, andererseits würde dies auch einen, wenn auch weniger großen, Rückschritt bedeuten, da wir wieder, allerdings wenige, Nicht-Nulleinträge unterhalb der Diagonale erzeugen würden.
Um das Gesamtziel unserer Reise zu erreichen, gehen wir also einen kleinen Umweg.

\subsubsection*{Invariante Untervektorräume}

An dieser Stelle rufen wir uns den Fall einer diagonalisierbaren Matrix ins Gedächtnis und denken darüber nach, welche Eigenschaft einer Basis aus Eigenvektoren eigentlich ausschlaggebend dafür ist, einen Basiswechsel zu einer Diagonalmatrix zu ermöglichen.

Sei~$\vv$ ein Eigenvektor von~$A$.
Dann wird der Untervektorraum~$U \defas \C\vv$ \emph{auf sich selbst} abgebildet, d.\,h. $A\uv \in U$ für alle $\uv \in U$.
Die Abbildung von~$U$ durch~$A$ kann also mit einem \emph{einzelnen} Basisvektor dargestellt werden.

Diese Eigenschaft einer Menge, auf sich selbst abzubilden, gestehen wir nun auch \emph{mehrdimensionalen} Untervektorräumen zu und untersuchen einen Basiswechsel mit einer ihrer Basen.

\begin{definition}[Invarianter Untervektorraum]
    Sei $A \in \C^{n \times n}$.
    Ein Untervektorraum~$U$ von~$\C^n$ heißt \textit{($A$-)invariant}, falls
    \begin{equation}
        A\uv \in U \quad \text{für alle $\uv \in U$}.
    \end{equation}
\end{definition}

\begin{remark}
    Eigenräume sind also insbesondere invariant.
\end{remark}

\begin{example}
    \label{exm:invar_block}
    Sei~$A \in \C^{3 \times 3}$ und $\vv_1,\vv_2,\vv_3 \in \C^3$ linear unabhängig, sodass
    \begin{equation}
        U \defas \linspan(\{\vv_1,\vv_2\}) \quad\text{und}\quad W \defas \linspan(\{\vv_3\})
    \end{equation}
    $A$-invariante Untervektorräume von~$\C^3$ bilden, also insbesondere $\C^3 = U \oplus W$.
    Es gilt also
    \begin{alignat}{5}
        A\vv_1 & = b_{11}\vv_1 &  & + b_{21}\vv_2                   &  &             \\
        A\vv_2 & = b_{12}\vv_1 &  & + b_{22}\vv_2 \phantom{+ \vv_3} &  &             \\
        A\vv_3 & =             &  &                                 &  & b_{33}\vv_3 \\
    \end{alignat}
    für $b_{11},b_{12},b_{21},b_{22},b_{33} \in \C$ und somit
    \begin{equation}
        A \underbrace{
            \begin{pmatrix}
                |     & |     & |     \\
                \vv_1 & \vv_2 & \vv_3 \\
                |     & |     & |
            \end{pmatrix}
        }_{\asdef V}
        =
        \begin{pmatrix}
            |     & |     & |     \\
            \vv_1 & \vv_2 & \vv_3 \\
            |     & |     & |
        \end{pmatrix}
        \underbrace{
            \begin{pmatrix}
                b_{11} & b_{12} & 0      \\
                b_{21} & b_{22} & 0      \\
                0      & 0      & b_{33}
            \end{pmatrix}
        }_{\asdef B}.
    \end{equation}
    Da~$V$ aufgrund der linearen Unabhängigkeit von $\vv_1, \vv_2, \vv_3$ invertierbar ist, gilt  $A = VBV\inv$.
    Die Matrix~$A$ ist also ähnlich zur Blockdiagonalmatrix~$B$.
\end{example}

Wir können Beispiel~\ref{exm:invar_block} problemlos verallgemeinern:
Ist $A \in \C^{n \times n}$ und
\begin{equation}
    \C^n = U_1 \oplus \cdots \oplus U_m    
\end{equation}
für $A$-invariante Untervektorräume $U_1, \ldots, U_m$, dann gibt es einen Basiswechsel zu einer Blockdiagonalmatrix
\begin{equation}
    B \defas
    \begin{pmatrix}
        \drawmatrix{B_1} &        & 0                \\
                         & \ddots &                  \\ 
        0                &        & \drawmatrix{B_m}
    \end{pmatrix}
\end{equation}
mit Blockmatrizen~$B_j \in \C^{n_j \times n_j}$ der Größe $n_j \defas \dim U_j$.
Die Umkehrung gilt dabei genauso: Gibt es einen Basiswechsel zu einer Blockdiagonalmatrix, so wird~$\C^n$ in entsprechende $A$-invariante Untervektorräume zerlegt.

\begin{remark}
    Im Fall einer diagonalisierbaren Matrix haben die Blöcke die Größe~$1$, da Eigenvektoren auf ein Vielfaches \emph{von sich selbst} abgebildet werden und die invarianten Untervektorräume demnach eindimensional sind.
\end{remark}

Die Blockstruktur wird also in der Tat mit Basen invarianter Untervektorräume erzeugt.
Um eine nicht-diagonalisierbare Matrix mittels eines Basiswechsels auf Blockdiagonalform zu bringen, müssen wir uns also auf die Suche nach einer invarianten Erweiterung von Eigenräumen begeben.

\subsubsection*{Haupträume}

Wir erinnern uns, dass bei nicht-diagonalisierbaren Matrizen~$A \in \C^{n \times n}$ zu wenig linear unabhängige Eigenvektoren zur Bildung einer Basis zur Verfügung stehen.
Demnach könnten wir versuchen, Eigenräume aufzuweichen und passend zu erweitern, sodass jedoch ihre entscheidene Eigenschaft der Invarianz erhalten bleibt.
Die Tatsache, dass
\begin{equation}
    \Eig_A(\lambda) = \nullsp(A - \lambda I) \subseteq \nullsp(A - \lambda I)^2 \subseteq \cdots \subseteq \nullsp(A - \lambda I)^n \subseteq \cdots
\end{equation}
für $\lambda \in \sigma(A)$, lässt uns Eigenräume als Nullräume von $A - \lambda I$  kanonisch erweitern, indem wir auch Potenzen von $A - \lambda I$ betrachten.

\begin{example}
    Sei
    \begin{equation}
        A \defas \begin{pmatrix}
            1 & 1 & 1  \\
            0 & 1 & 1  \\
            0 & 0 & 1
        \end{pmatrix}
    \end{equation}
    mit $\sigma(A) = \{1\}$.
    Für $\lambda \defas 1$ gilt dann
    \begin{equation}
        A - \lambda I =
        \begin{pmatrix}
            0 & 1 & 1  \\
            0 & 0 & 1  \\
            0 & 0 & 0
        \end{pmatrix}
    \end{equation}
    mit
    \begin{equation}
        \Eig_A(\lambda) = \nullsp(A - \lambda I) = \C \begin{pmatrix}1 \\ 0 \\ 0\end{pmatrix},
    \end{equation}
    sodass $\dim \Eig_A(\lambda) = 1 < 3 = n$.
    Lassen wir Potenzen bis~$n$ zu, erhalten wir
    \begin{equation}
        (A - \lambda I)^2 =
        \begin{pmatrix}
            0 & 1 & 1  \\
            0 & 0 & 1  \\
            0 & 0 & 0
        \end{pmatrix}
        \begin{pmatrix}
            0 & 1 & 1  \\
            0 & 0 & 1  \\
            0 & 0 & 0
        \end{pmatrix}
        =
        \begin{pmatrix}
            0 & 0 & 1  \\
            0 & 0 & 0  \\
            0 & 0 & 0
        \end{pmatrix}
    \end{equation}
    mit
    \begin{equation}
        \nullsp(A - \lambda I)^2 = \C \begin{pmatrix}1 \\ 0 \\ 0\end{pmatrix} + \C \begin{pmatrix}0 \\ 1 \\ 0\end{pmatrix}
    \end{equation}
    und
    \begin{equation}
        (A - \lambda I)^3 =
        \begin{pmatrix}
            0 & 1 & 1  \\
            0 & 0 & 1  \\
            0 & 0 & 0
        \end{pmatrix}
        \begin{pmatrix}
            0 & 0 & 1  \\
            0 & 0 & 0  \\
            0 & 0 & 0
        \end{pmatrix}
        =
        \begin{pmatrix}
            0 & 0 & 0  \\
            0 & 0 & 0  \\
            0 & 0 & 0
        \end{pmatrix}
    \end{equation}
    mit
    \begin{equation}
        \nullsp(A - \lambda I)^3 = \C^3.
    \end{equation}
    Wir erweitern die Nullräume von~$A - \lambda I$ hier also tatsächlich, indem wir Potenzen zulassen.
\end{example}

\begin{definition}[Hauptvektor, Hauptraum]
    Sei $A \in \C^{n \times n}$ und~$\lambda \in \sigma(A)$.
    Dann heißt die Menge
    \begin{equation}
        \Hau_A(\lambda) \defas \nullsp(A - \lambda I)^n
    \end{equation}
    \textit{Hauptraum von~$A$ zu~$\lambda$}.
    Ein Vektor $\vv \in \Hau_A(\lambda)$ mit $\vv \not= \vec{0}$ heißt \textit{Hauptvektor zu~$\lambda$ der Stufe~$k$}, $k \in \N$, falls
    \begin{equation}
        \vv \in \nullsp(A - \lambda I)^k \setminus \nullsp(A - \lambda I)^{k-1}.
    \end{equation}
\end{definition}

\begin{remark}
    \label{rem:hau}
    \begin{enumerate}[label=(\alph*)]
        \item \label{it:nullsp_pow_subset}
              Es gilt $\nullsp(A - \lambda I)^m \subseteq \nullsp(A - \lambda I)^{m+1}$ für alle $m \in \N$.
        \item $\Hau_A(\lambda)$ ist ein Untervektorraum von~$\C^n$, für den gilt
              \begin{equation}
                  1 \leq \dim\Eig_A(\lambda) \leq \dim\Hau_A(\lambda) \leq n.
              \end{equation}
        \item Eigenvektoren sind Hauptvektoren der Stufe~$1$.
    \end{enumerate}
\end{remark}

Wir müssen nun hoffen, dass Haupträume als Erweiterung von Eigenräumen genügend linear unabhängige Vektoren zur Bildung einer Basis liefern und zudem invariant sind, um einen Basiswechsel zu einer Blockdiagonalmatrix zu ermöglichen.

Wir halten zunächst fest, dass die Erweiterung des Nullraums von~$A - \lambda I$ durch Potenzbildung bei Potenz~$n$ endet.
\begin{lemma}
    \label{lem:hau_max_level}
    Sei $A \in \C^{n \times n}$ und $\lambda \in \sigma(A)$.
    Dann gilt
    \begin{equation}
        \nullsp(A - \lambda I)^m = \Hau_A(\lambda) \quad \text{für $m \geq n$}.
    \end{equation}
\end{lemma}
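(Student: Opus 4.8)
The plan is to write $N \defas A - \lambda I$ and to analyse the ascending chain of subspaces
\begin{equation*}
    \{\vec{0}\} = \nullsp(N^0) \subseteq \nullsp(N^1) \subseteq \nullsp(N^2) \subseteq \cdots,
\end{equation*}
whose monotonicity is precisely Bemerkung~\ref{rem:hau}\ref{it:nullsp_pow_subset}. All of these are subspaces of the $n$-dimensional space $\C^n$, so their dimensions form a non-decreasing sequence of non-negative integers bounded above by~$n$, and the chain is therefore eventually stationary. What remains to be shown is that it becomes stationary no later than at the exponent~$n$, and that its stationary value is $\Hau_A(\lambda) = \nullsp(N^n)$.

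The crucial step, and the one I expect to demand the most care, is to rule out that the chain can stand still for one exponent and then keep growing afterwards: I would show that as soon as $\nullsp(N^k) = \nullsp(N^{k+1})$ holds for some~$k$, then $\nullsp(N^k) = \nullsp(N^j)$ for every $j \geq k$. This follows by induction on~$j$; in the inductive step, any $\vv \in \nullsp(N^{j+1})$ satisfies $N\vv \in \nullsp(N^j) = \nullsp(N^k)$ by the induction hypothesis, hence $N^{k+1}\vv = N^k(N\vv) = \vec{0}$, so $\vv \in \nullsp(N^{k+1}) = \nullsp(N^k)$; the reverse inclusion is again monotonicity.

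A dimension count then finishes the argument. If the dimensions $\dim\nullsp(N^0), \dim\nullsp(N^1), \ldots, \dim\nullsp(N^n)$ were strictly increasing throughout, we would get $\dim\nullsp(N^n) \geq n$, forcing $\nullsp(N^n) = \C^n$, which can no longer grow; otherwise there is a smallest index $k \leq n$ with $\nullsp(N^{k-1}) = \nullsp(N^k)$, and by the step above the chain is constant from the exponent $k-1$ onward. In either case $\nullsp(N^m) = \nullsp(N^n) = \Hau_A(\lambda)$ for all $m \geq n$, as claimed.
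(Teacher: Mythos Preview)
Your argument is correct, but it follows a genuinely different line from the paper's. The paper fixes a nonzero $\vv \in \nullsp(A-\lambda I)^m$, lets $k$ be its stage (the least exponent killing it), and shows directly that the $k$ vectors $\vv, (A-\lambda I)\vv, \ldots, (A-\lambda I)^{k-1}\vv$ are linearly independent by successively multiplying a dependence relation with powers of $A-\lambda I$; this forces $k \le n$ and hence $\vv \in \nullsp(A-\lambda I)^n$. You instead argue globally with the ascending null-space chain: once it stalls it stays stalled, and a pigeonhole on dimensions forces a stall at or before exponent~$n$. Your stabilisation step is exactly the content of the paper's subsequent Lemma~\ref{lem:nullsp_pow_end}, so your route effectively merges the two lemmas into one argument and gets Lemma~\ref{lem:nullsp_pow_end} for free. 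Conversely, the paper's independence computation is precisely what later reappears as Proposition~\ref{prop:jord_chain_indep} (linear independence of a Jordan chain), so its approach foreshadows that result instead. Both proofs are short and elementary; yours is the standard ``chain of kernels'' argument, the paper's is the ``Krylov sequence'' argument, and each previews a different piece of the later development.
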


Außerdem können wir zeigen, dass eine Erweiterung, die einmal ins Stocken geraten ist, nicht mehr weiter fortgesetzt werden kann.
\begin{lemma}
    \label{lem:nullsp_pow_end}
    Sei $A \in \C^{n \times n}$ und $\lambda \in \sigma(A)$.
    Dann gilt: Falls
    \begin{equation}
        \nullsp(A - \lambda I)^{m+1} = \nullsp(A - \lambda I)^m \quad \text{für ein $m \in \N$},
    \end{equation}
    dann auch
    \begin{equation}
        \nullsp(A - \lambda I)^{m+k} = \nullsp(A - \lambda I)^m \quad \text{für alle $k \in \N$}.
    \end{equation}
\end{lemma}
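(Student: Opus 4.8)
The plan is to strip the statement down to a single nilpotent-style operator and then run a short induction on~$k$. Write $N \defas A - \lambda I$, so that the hypothesis reads $\nullsp(N^{m+1}) = \nullsp(N^m)$ and the claim becomes $\nullsp(N^{m+k}) = \nullsp(N^m)$ for all $k \in \N$. Recall from Remark~\ref{rem:hau}\ref{it:nullsp_pow_subset} that the null spaces of the powers of~$N$ form an increasing chain; in particular $\nullsp(N^{m+k}) \subseteq \nullsp(N^{m+k+1})$ holds for free for every~$k$, so in the induction step only the reverse inclusion needs an argument.

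I would induct on~$k$, with the base case $k = 1$ being exactly the hypothesis (and $k = 0$, if included in~$\N$ here, being the trivial identity $N^m = N^m$). For the step, assume $\nullsp(N^{m+k}) = \nullsp(N^m)$ and take any $\vv$ with $N^{m+k+1}\vv = \vec{0}$. Using the factorisation $N^{m+k+1} = N^{m+1}N^{k}$, this says $N^{m+1}(N^{k}\vv) = \vec{0}$, i.e. $N^{k}\vv \in \nullsp(N^{m+1}) = \nullsp(N^m)$ by hypothesis. Hence $N^{m+k}\vv = N^{m}(N^{k}\vv) = \vec{0}$, so $\vv \in \nullsp(N^{m+k}) = \nullsp(N^m)$ by the induction hypothesis. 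This yields $\nullsp(N^{m+k+1}) \subseteq \nullsp(N^m)$, and combined with the automatic inclusion above we get $\nullsp(N^{m+k+1}) = \nullsp(N^m)$, closing the induction.

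I do not expect a real obstacle: the entire content is the exponent bookkeeping $N^{m+k+1} = N^{m+1}N^{k}$ together with the hypothesis, and the only care needed is to keep the indices straight and to make sure the increasing-chain fact is invoked for the "easy" inclusion. A marginally slicker but essentially equivalent variant avoids the explicit induction by observing directly that, once $\nullsp(N^{m+1}) = \nullsp(N^m)$, the map~$N$ sends $\nullsp(N^{m+j+1})$ into $\nullsp(N^{m+j})$ for every~$j$; for a didactic exposition, though, the plain induction above seems the clearest presentation, and it also meshes well with Lemma~\ref{lem:hau_max_level}, where the same stabilisation phenomenon is used.
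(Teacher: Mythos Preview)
Your proposal is correct and essentially the same as the paper's proof: both use the inclusion from Bemerkung~\ref{rem:hau}\ref{it:nullsp_pow_subset} for one direction and, for the other, the factorisation $N^{m+1}N^{k-1}$ together with the hypothesis $\nullsp(N^{m+1})=\nullsp(N^m)$ to drop the exponent by one. The only cosmetic difference is that the paper writes this descent as a chain $(A-\lambda I)^{m+k}\vv=\cdots=(A-\lambda I)^m\vv$ with dots, whereas you package the same step as an explicit induction on~$k$.
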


Die folgenden zentralen strukturellen Aussagen geben nun Aufschluss darüber, dass es in der Tat die Haupträume von~$A$ sind, auf die wir unser Augenmerk zur Blockdiagonalisierung von~$A$ richten sollten.

\begin{proposition}
    \label{prop:hau_span}
    Sei $A \in \C^{n \times n}$.
    Die Hauptvektoren von~$A$ spannen~$\C^n$ auf.
\end{proposition}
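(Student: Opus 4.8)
The plan is to argue by induction on~$n$, exploiting the Fitting-type splitting of~$\C^n$ induced by a single eigenvalue. Fix $\lambda \in \sigma(A)$ (which exists by Satz~\ref{thm:ev_exist}) and set $M \defas (A - \lambda I)^n$. I claim that
\begin{equation*}
    \C^n = \nullsp(M) \oplus \colsp(M)
\end{equation*}
and that both summands are $A$-invariant. For directness: if $\vv \in \nullsp(M) \cap \colsp(M)$, write $\vv = M\wv$; then $M^2\wv = M\vv = \vec{0}$, so $\wv \in \nullsp(M^2) = \nullsp(A - \lambda I)^{2n} = \nullsp(A - \lambda I)^n = \nullsp(M)$ by Lemma~\ref{lem:hau_max_level} (using $2n \geq n$), whence $\vv = M\wv = \vec{0}$. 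Since $\dim\nullsp(M) + \dim\colsp(M) = n$ (Rangsatz), the sum is all of~$\C^n$. Invariance is immediate because~$M$ commutes with~$A$: from $M\vv = \vec{0}$ we get $M(A\vv) = A(M\vv) = \vec{0}$, and $A(M\wv) = M(A\wv) \in \colsp(M)$.

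The next step is to set up the induction. By definition $\nullsp(M) = \Hau_A(\lambda)$, so every nonzero vector of~$\nullsp(M)$ is a Hauptvektor of~$A$; this summand is already spanned by Hauptvektoren. For $W \defas \colsp(M)$ I first note that~$\lambda$ is \emph{not} an eigenvalue of~$A|_W$: any $\vv \in W$ with $(A - \lambda I)\vv = \vec{0}$ lies in $\nullsp(A - \lambda I) \cap W \subseteq \nullsp(M) \cap W = \{\vec{0}\}$. Since $\Eig_A(\lambda) \subseteq \nullsp(M)$ is nonzero, $\dim\nullsp(M) \geq 1$ and hence $\dim W \leq n-1$. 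Choosing a basis of~$W$ identifies~$A|_W$ with a matrix in~$\C^{(\dim W) \times (\dim W)}$, to which the induction hypothesis applies: the Hauptvektoren of~$A|_W$ span~$W$. Finally, a Hauptvektor of~$A|_W$ is a Hauptvektor of~$A$: if $\vec{0} \neq \wv \in W$ with $(A|_W - \mu I)^{\dim W}\wv = \vec{0}$, then, since $(A|_W - \mu I)^k$ and $(A - \mu I)^k$ agree on~$W$, we get $(A - \mu I)^{\dim W}\wv = \vec{0}$, so $(A - \mu I)^n\wv = \vec{0}$ by Remark~\ref{rem:hau}\ref{it:nullsp_pow_subset} (the kernels increase), i.e.\ $\wv \in \Hau_A(\mu)$ --- here $\mu \in \sigma(A)$ because the last nonzero vector in the chain $\wv, (A-\mu I)\wv, \ldots$ is an eigenvector of~$A$. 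Combining, $\C^n = \nullsp(M) \oplus W$ is spanned by Hauptvektoren of~$A$, and the induction base $n = 1$ is trivial.

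I expect the main obstacle to be the careful handling of the restriction~$A|_W$: one must make sure that the kernels $\nullsp(A - \lambda I)^k$ genuinely stabilise (Lemma~\ref{lem:hau_max_level} is exactly what is needed, with Lemma~\ref{lem:nullsp_pow_end} underlying it), that the notion \enquote{Hauptvektor} is preserved when passing to and from the invariant subspace~$W$ despite the exponent dropping from~$n$ to~$\dim W$, and that $\dim W$ is \emph{strictly} smaller than~$n$ so that the induction actually descends. The purely computational parts --- directness of the sum, commutation of~$M$ with~$A$, the dimension count --- are routine.
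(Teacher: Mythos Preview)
Your proposal is correct and follows essentially the same route as the paper: induction on~$n$ via the Fitting splitting $\C^n = \nullsp(A-\lambda I)^n \oplus \colsp(A-\lambda I)^n$ (established with Lemma~\ref{lem:hau_max_level} and the Rangsatz), followed by restricting~$A$ to the $A$-invariant, strictly lower-dimensional summand $\colsp(A-\lambda I)^n$ and translating Hauptvektoren of the restriction back to Hauptvektoren of~$A$. Your write-up even makes explicit two points the paper leaves tacit, namely that $\lambda \notin \sigma(A|_W)$ and that the eigenvalues~$\mu$ arising on~$W$ actually lie in~$\sigma(A)$.
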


\begin{proposition}
    \label{prop:hau_indep}
    Sei $A \in \C^{n \times n}$.
    Hauptvektoren zu verschiedenen Eigenwerten von~$A$ sind linear unabhängig.
\end{proposition}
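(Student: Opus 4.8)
Der Plan ist, die Aussage auf zwei Beobachtungen über das Zusammenspiel verschiedener Haupträume zurückzuführen. Zuerst würde ich festhalten, dass jeder Hauptraum $\Hau_A(\lambda)$ unter~$A$ invariant ist --- das folgt in einer Zeile aus $(A - \lambda I)^n (A\vv) = A (A - \lambda I)^n \vv = \vec{0}$ für $\vv \in \Hau_A(\lambda)$, da $A$ mit $(A - \lambda I)^n$ kommutiert --- und somit auch unter $A - \mu I$ für jedes $\mu \in \C$. Danach würde ich zeigen, dass für $\mu \neq \lambda$ die Restriktion von $A - \mu I$ auf $\Hau_A(\lambda)$ injektiv ist: Gäbe es $\wv \in \Hau_A(\lambda)$ mit $\wv \neq \vec{0}$ und $A\wv = \mu\wv$, so wäre $(A - \lambda I)\wv = (\mu - \lambda)\wv$ und induktiv $(A - \lambda I)^n \wv = (\mu - \lambda)^n \wv$; wegen $\wv \in \nullsp(A - \lambda I)^n$ folgte $(\mu - \lambda)^n \wv = \vec{0}$ und damit, da $\mu \neq \lambda$, der Widerspruch $\wv = \vec{0}$. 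Da $\Hau_A(\lambda)$ endlichdimensional ist, ist als Komposition injektiver Endomorphismen dann auch $(A - \mu I)^n$ auf $\Hau_A(\lambda)$ injektiv.

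Im Hauptschritt würde ich zu paarweise verschiedenen Eigenwerten $\lambda_1, \ldots, \lambda_r \in \sigma(A)$ und Hauptvektoren $\vv_i \in \Hau_A(\lambda_i)$ eine Relation $c_1 \vv_1 + \cdots + c_r \vv_r = \vec{0}$ (mit $c_1, \ldots, c_r \in \C$) betrachten und für ein festes~$i$ den Operator $Q_i \defas \prod_{j \neq i} (A - \lambda_j I)^n$ darauf anwenden. Da alle Faktoren Polynome in~$A$ sind und daher kommutieren und da $(A - \lambda_j I)^n$ den Summanden $c_j \vv_j$ annulliert, bleibt nur $Q_i (c_i \vv_i) = \vec{0}$ übrig. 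Nach dem ersten Schritt bildet jeder Faktor von~$Q_i$ den Raum $\Hau_A(\lambda_i)$ injektiv in sich ab, also ist $Q_i$ auf $\Hau_A(\lambda_i)$ injektiv; wegen $c_i \vv_i \in \Hau_A(\lambda_i)$ folgt $c_i \vv_i = \vec{0}$ und, da $\vv_i \neq \vec{0}$, schließlich $c_i = 0$. Weil~$i$ beliebig war, sind $\vv_1, \ldots, \vv_r$ linear unabhängig.

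Die eigentliche Hürde --- und zugleich der geometrische Kern --- steckt im ersten Schritt: in der Einsicht, dass $A - \mu I$ einen Hauptraum zu einem \emph{anderen} Eigenwert bijektiv abbildet, ihn beim \enquote{Herausfiltern} der übrigen Summanden also unangetastet lässt; der Hauptschritt ist danach nur der übliche Trick, die störenden Summanden der Reihe nach durch geeignete Potenzen von $A - \lambda_j I$ zu annullieren. Beim Ausformulieren muss man lediglich darauf achten, dass die Invarianz tatsächlich benötigt wird, damit $Q_i$ den Hauptraum $\Hau_A(\lambda_i)$ \emph{in sich} abbildet und die Injektivität der Restriktion zum Tragen kommt. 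Lemma~\ref{lem:hau_max_level} wird hierfür nicht gebraucht (man könnte damit allerdings die Potenz~$n$ durch kleinere Exponenten ersetzen); dasselbe Argument zeigt überdies, dass die Summe $\Hau_A(\lambda_1) + \cdots + \Hau_A(\lambda_r)$ direkt ist.
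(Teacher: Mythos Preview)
Dein Beweis ist korrekt und teilt mit dem Artikel den Ausgangspunkt --- auf die Relation $\sum_j \alpha_j \vv_j = \vec{0}$ wird ein Operator der Form $\prod_{j\neq i}(A-\lambda_j I)^n$ angewendet, um alle bis auf einen Summanden zu annullieren ---, behandelt den überlebenden Term aber auf andere Weise. Der Artikel (nach Axler) multipliziert zusätzlich mit $(A-\lambda_i I)^{k_i-1}$, wobei $k_i$ die Stufe von $\vv_i$ ist, und expandiert jeden Faktor $(A-\lambda_j I)^n = \bigl((A-\lambda_i I) + (\lambda_i-\lambda_j)I\bigr)^n$ mittels des Binomischen Lehrsatzes; nach dem Wegfallen aller Terme mit einer Potenz von $A-\lambda_i I$ bleibt $\alpha_i \prod_{j\neq i}(\lambda_i-\lambda_j)^n \cdot (A-\lambda_i I)^{k_i-1}\vv_i = \vec{0}$, woraus $\alpha_i = 0$ folgt. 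Du ersetzt diese Rechnung durch die strukturelle Einsicht, dass $A-\mu I$ für $\mu\neq\lambda_i$ den Hauptraum $\Hau_A(\lambda_i)$ \emph{injektiv in sich} abbildet, womit $Q_i$ auf $\Hau_A(\lambda_i)$ injektiv ist und $c_i\vv_i=\vec{0}$ unmittelbar folgt. Dein Zugang ist konzeptioneller, vermeidet die Binomialentwicklung vollständig und passt insofern gut zum geometrischen Anspruch des Artikels; er greift allerdings auf die $A$-Invarianz der Haupträume vor, die im Artikel erst als Prop.~\ref{prop:hau_inv} formuliert wird --- was unproblematisch ist, da du sie in einer Zeile selbst begründest und keine Zirkularität entsteht. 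Eine Kleinigkeit: Die Endlichdimensionalität von $\Hau_A(\lambda)$ wird für die Injektivität der Komposition nicht benötigt; Kompositionen injektiver Abbildungen sind stets injektiv.
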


\begin{theorem}
    \label{thm:hau_direct_sum}
    Sei $A \in \C^{n \times n}$.
    Dann gilt
    \begin{equation}
        \C^n = \Hau_A(\lambda_1) \oplus \cdots \oplus \Hau_A(\lambda_m),
    \end{equation}
    wobei $\lambda_1, \ldots, \lambda_m$ die verschiedenen Eigenwerte von~$A$ bezeichnen.
\end{theorem}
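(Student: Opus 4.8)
The plan is to obtain Theorem~\ref{thm:hau_direct_sum} as a short consequence of the two preceding propositions. First I would argue that the (non-direct) sum already exhausts~$\C^n$: by definition every Hauptvektor of~$A$ lies in $\Hau_A(\lambda_j)$ for some $j \in \{1,\dots,m\}$, so the linear span of all Hauptvektoren is contained in $\Hau_A(\lambda_1) + \cdots + \Hau_A(\lambda_m)$. Since that span equals~$\C^n$ by Proposition~\ref{prop:hau_span}, we get $\Hau_A(\lambda_1) + \cdots + \Hau_A(\lambda_m) = \C^n$.

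Next I would establish directness, for which it suffices to show that a representation of the zero vector is trivial: if $\vv_1 + \cdots + \vv_m = \vec{0}$ with $\vv_j \in \Hau_A(\lambda_j)$, then $\vv_j = \vec{0}$ for all $j$. Suppose not; drop the vanishing summands and keep those $\vv_j \neq \vec{0}$. The remaining vectors are Hauptvektoren to pairwise distinct eigenvalues whose sum is $\vec{0}$, contradicting their linear independence guaranteed by Proposition~\ref{prop:hau_indep}. Equivalently, one checks $\Hau_A(\lambda_j) \cap \sum_{i \neq j} \Hau_A(\lambda_i) = \{\vec{0}\}$ for each~$j$: any nonzero vector in this intersection would be a Hauptvektor to~$\lambda_j$ that is also a linear combination of Hauptvektoren to the other eigenvalues, again violating Proposition~\ref{prop:hau_indep}. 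This second formulation is the one that unwinds the $m$-fold direct sum through the two-summand definition of~$\oplus$ used in the text, by induction on~$m$ (taking $\Hau_A(\lambda_m)$ together with the already direct sum of the first $m-1$ Haupträume).

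I do not expect a genuine obstacle here, since the theorem is essentially a corollary of Propositions~\ref{prop:hau_span} and~\ref{prop:hau_indep}, where the substantive work resides. The only point demanding a little care is the bookkeeping around the meaning of an iterated direct sum $U_1 \oplus \cdots \oplus U_m$: one should either fix it as the inductive construction $(U_1 \oplus \cdots \oplus U_{m-1}) \oplus U_m$ and verify the intersection condition at each step, or state once and for all the equivalence with uniqueness of decompositions and then invoke the zero-sum argument above.
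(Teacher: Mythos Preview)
Your proposal is correct and follows exactly the approach of the paper: invoke Proposition~\ref{prop:hau_span} to obtain $\C^n = \Hau_A(\lambda_1) + \cdots + \Hau_A(\lambda_m)$ and Proposition~\ref{prop:hau_indep} to obtain directness. If anything, your handling of the iterated direct sum is more careful than the paper's, which simply records $\Hau_A(\lambda_1) \cap \cdots \cap \Hau_A(\lambda_m) = \{\vec{0}\}$ without spelling out the reduction to the two-summand definition.
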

\begin{proof}
    Prop.~\ref{prop:hau_span} liefert
    \begin{equation}
        \C^n = \Hau_A(\lambda_1) + \cdots + \Hau_A(\lambda_m).
    \end{equation}
    Prop.~\ref{prop:hau_indep} liefert
    \begin{equation}
        \Hau_A(\lambda_1) \cap \cdots \cap \Hau_A(\lambda_m) = \{\vec{0}\}.
    \end{equation}
\end{proof}

\begin{proposition}
    \label{prop:hau_inv}
    Sei~$A \in \C^{n \times n}$.
    Die Haupträume von~$A$ sind $A$-invariant.
\end{proposition}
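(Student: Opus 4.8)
Fixiere einen Eigenwert $\lambda \in \sigma(A)$ und setze zur Abkürzung $N \defas A - \lambda I$, sodass $\Hau_A(\lambda) = \nullsp(N^n)$ gilt. Zu zeigen ist $A\vv \in \nullsp(N^n)$ für jedes $\vv \in \nullsp(N^n)$, d.\,h. $N^n(A\vv) = \vec{0}$.

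Der entscheidende (und einzige nicht ganz triviale) Punkt ist, dass $A$ mit $N$ vertauscht: Es gilt
\begin{equation*}
    A N = A(A - \lambda I) = A^2 - \lambda A = (A - \lambda I)A = N A,
\end{equation*}
und durch wiederholtes Anwenden folgt $A N^n = N^n A$. Für $\vv \in \nullsp(N^n)$ erhalten wir damit
\begin{equation*}
    N^n (A\vv) = A (N^n \vv) = A\vec{0} = \vec{0},
\end{equation*}
also $A\vv \in \nullsp(N^n) = \Hau_A(\lambda)$, was zu zeigen war.

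Alternativ, und im Geiste dieses Artikels vielleicht noch anschaulicher, ohne die Vertauschungsrelation auszuschreiben: Für $\vv \in \nullsp(N^n)$ schreibt man $A\vv = (N + \lambda I)\vv = N\vv + \lambda\vv$. Der Summand $\lambda\vv$ liegt trivial in $\nullsp(N^n)$, und wegen $N^n(N\vv) = N^{n-1}(N^n\vv) = \vec{0}$ (unter Benutzung von Bem.~\ref{rem:hau}\ref{it:nullsp_pow_subset} bzw. $N^n\vv = \vec{0}$) liegt auch $N\vv$ in $\nullsp(N^n)$; als Summe zweier Elemente des Untervektorraums $\Hau_A(\lambda)$ ist somit $A\vv \in \Hau_A(\lambda)$. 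Eine echte Hürde gibt es bei dieser Aussage nicht; der gesamte Gehalt steckt in der elementaren Beobachtung, dass Potenzen von $A - \lambda I$ mit $A$ kommutieren.
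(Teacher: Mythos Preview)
Your first argument is correct and essentially identical to the paper's own proof: both establish the commutation $(A-\lambda I)A = A(A-\lambda I)$ and immediately deduce $(A-\lambda I)^n A\vv = A(A-\lambda I)^n\vv = \vec{0}$. In your alternative argument the intermediate regrouping should read $N(N^n\vv)$ rather than $N^{n-1}(N^n\vv)$ (since $N^n\cdot N = N^{n+1} = N\cdot N^n$, not $N^{n-1}\cdot N^n$); the conclusion is of course unaffected.
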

\begin{proof}
    Sei $\lambda \in \sigma(A)$ und $\vv \in \Hau_A(\lambda)$, d.\,h. $(A - \lambda I)^n \vv = \vec{0}$.
    Da
    \begin{equation}
        (A - \lambda I) A = A \cdot A - \lambda A = A(A - \lambda I),
    \end{equation}
    gilt auch
    \begin{equation}
        (A - \lambda I)^n A \vv = A (A - \lambda I)^n \vv = A \cdot \vec{0} = \vec{0}.
    \end{equation}
    Also ist $A\vv \in \Hau_A(\lambda)$.
\end{proof}

Da also Haupträume von~$A$ eine Basis von~$\C^n$ liefern (Satz~\ref{thm:hau_direct_sum}) und $A$-invariant sind (Prop.~\ref{prop:hau_inv}), ermöglichen deren Basen einen Basiswechsel zu einer Blockdiagonalmatrix.
Pro Eigenwert~$\lambda \in \sigma(A)$ gibt es darin einen Block der Größe $\dim \Hau_A(\lambda)$.

Wir haben das Etappenziel also erreicht.
Die Basis, die für einen Hauptraum an dieser Stelle gewählt werden kann, ist allerdings nach wie vor \emph{beliebig}, d.\,h. es gibt immer noch Spielraum für eine \enquote{bessere} Basis, die einen Basiswechsel zu einer, im Vergleich zu einer Blockdiagonalmatrix, \enquote{noch diagonaleren} Matrix erlaubt.
Das motiviert uns, unsere Reise fortzusetzen.

\subsection{Blockweise Trigonalisierung}

Die nächste Etappe soll zum Ziel haben, einen Basiswechsel zu einer \emph{blockweisen} Dreiecksmatrix zu finden.
Damit würden wir den Rückschritt, den wir in der vorherigen Etappe eingestehen mussten, wieder gutmachen, da die Nicht-Nulleinträge unterhalb der Diagonale wieder zu Nulleinträgen würden.
Dieser Umweg auf unserer Reise wäre damit abgeschlossen, sodass wir wieder auf den direkten Pfad zurückkehren können, auf den uns die erste Etappe zunächst gebracht hatte.

Da die Basen der Haupträume von~$A$ einen Basiswechsel zu einer Blockdiagonalmatrix ermöglichen, d.\,h. $A = V_H B V_H\inv$ mit
\begin{equation}
    B = \begin{pmatrix}
        \drawmatrix{B_1} &        & 0                \\
                         & \ddots &                  \\
        0                &        & \drawmatrix{B_m}
    \end{pmatrix}
\end{equation}
und einer invertierbaren Matrix $V_H \in \C^{n \times n}$, die (beliebige) Basen der Haupt\-räume enthält, können wir jeden Block als eigenständige Abbildung innerhalb eines Hauptraums betrachten und darin trigonalisieren.
Eine Trigonalisierung der Blöcke ergibt
\begin{equation}
    \begin{split}
        B & =
        \begin{pmatrix}
            \drawmatrix[width=6em]{V_1 U_1 V_1\inv} &        & 0                                       \\
                                                    & \ddots &                                         \\
            0                                       &        & \drawmatrix[width=6em]{V_m U_m V_m\inv}
        \end{pmatrix} \\
          & =
        \underbrace{
            \begin{pmatrix}
                \drawmatrix{V_1} &        & 0                \\
                                 & \ddots &                  \\
                0                &        & \drawmatrix{V_m}
            \end{pmatrix}
        }_{\asdef V_T}
        \underbrace{
            \begin{pmatrix}
                \drawmatrix{U_1} &        & 0                \\
                                 & \ddots &                  \\
                0                &        & \drawmatrix{U_m}
            \end{pmatrix}
        }_{\asdef U}
        \underbrace{
            \begin{pmatrix}
                \drawmatrix{V_1\inv} &        & 0                    \\
                                     & \ddots &                      \\
                0                    &        & \drawmatrix{V_m\inv}
            \end{pmatrix}
        }_{= V_T\inv}
    \end{split}
\end{equation}
für obere Dreiecksmatrizen $U_j \in \C^{n_j \times n_j}$ und invertierbare Matrizen $V_j \in \C^{n_j \times n_j}$.
Schließlich gilt
\begin{equation}
    \label{eq:basis_trigon_hau}
    A = V_H V_T U V_T\inv V_H\inv = V U V\inv,
\end{equation}
wobei im Produkt $V \defas V_H V_T$ die Basen der blockweisen Trigonalisierungen in den Basen der entsprechenden Haupträume ausgedrückt werden.

Die Diagonaleinträge jeder Dreiecksmatrix~$U_j$ entstammen dem Spektrum von~$B_j$ (Bem.~\ref{rem:trig_eig}), das jedoch aus nur einem einzelnen Eigenwert besteht.

\begin{proposition}
    \label{prop:hau_eig}
    Sei $A \in \C^{n \times n}$ und $\Hau_A(\lambda)$ ein Hauptraum zu~$\lambda \in \sigma(A)$.
    Eine Matrix~$B$ stelle zudem die Abbildung von $\Hau_A(\lambda)$ durch~$A$ in einer Basis von~$\Hau_A(\lambda)$ dar, d.\,h.
    \begin{equation}
        AV = VB
    \end{equation}
    für eine Matrix~$V$, die aus Basisvektoren von $\Hau_A(\lambda)$ besteht.
    Dann gilt
    \begin{equation}
        \sigma(B) = \{\lambda\}.
    \end{equation}
\end{proposition}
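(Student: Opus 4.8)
Der Plan ist, die behauptete Gleichheit $\sigma(B) = \{\lambda\}$ durch zwei getrennte Inklusionen zu zeigen. Die entscheidende Beobachtung ist dabei jeweils, dass man mit der Gleichung $AV = VB$ und der linearen Unabhängigkeit der Spalten von~$V$ zwischen Eigenvektoren von~$B$ (in Koordinaten) und Eigenvektoren von~$A$ (im $\C^n$) hin- und herwechseln kann: Ist $\vec{c}$ ein Koordinatenvektor und $\vv \defas V\vec{c}$, so folgt $A\vv = AV\vec{c} = V(B\vec{c})$, und da die Spalten von~$V$ linear unabhängig sind, wirkt~$V$ injektiv, d.\,h. $\vv = \vec{0}$ genau dann, wenn $\vec{c} = \vec{0}$.

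Für die Inklusion $\{\lambda\} \subseteq \sigma(B)$ würde ich so vorgehen: Da $\lambda \in \sigma(A)$, existiert ein Eigenvektor $\vv \not= \vec{0}$ mit $A\vv = \lambda\vv$; wegen $\Eig_A(\lambda) \subseteq \Hau_A(\lambda)$ liegt~$\vv$ im Spaltenraum von~$V$, also $\vv = V\vec{c}$ mit $\vec{c} \not= \vec{0}$. Dann liefert $V(B\vec{c}) = A\vv = \lambda\vv = V(\lambda\vec{c})$ zusammen mit der Injektivität von~$V$ sofort $B\vec{c} = \lambda\vec{c}$, also $\lambda \in \sigma(B)$.

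Für die umgekehrte Inklusion $\sigma(B) \subseteq \{\lambda\}$ sei $\mu \in \sigma(B)$ mit Eigenvektor $\vec{c} \not= \vec{0}$. Mit $\vv \defas V\vec{c} \not= \vec{0}$ folgt wie oben $A\vv = V(B\vec{c}) = \mu\vv$, d.\,h.~$\vv$ ist Eigenvektor von~$A$ zu~$\mu$. Da~$\vv$ aber im Spaltenraum von~$V$, also in $\Hau_A(\lambda) = \nullsp(A - \lambda I)^n$ liegt, und $(A - \lambda I)\vv = (\mu - \lambda)\vv$ gilt, erhalte ich $\vec{0} = (A - \lambda I)^n\vv = (\mu - \lambda)^n\vv$ und somit $(\mu - \lambda)^n = 0$, also $\mu = \lambda$. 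Zusammen folgt $\sigma(B) = \{\lambda\}$.

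Als Haupthindernis erwarte ich die zweite Inklusion — genauer den Schritt, einen Eigenvektor von~$B$ sauber zu einem Eigenvektor von~$A$ \emph{im Hauptraum} \enquote{hochzuheben} und dort die definierende Nullraum-Eigenschaft $(A - \lambda I)^n = 0$ auf $\Hau_A(\lambda)$ auszunutzen; die erste Inklusion und die rein rechnerischen Umformungen mit $AV = VB$ sind dagegen Routine.
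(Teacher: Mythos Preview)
Dein Beweis ist korrekt, und der entscheidende Teil --- die Inklusion $\sigma(B) \subseteq \{\lambda\}$ --- ist genau das Argument des Artikels: einen Eigenvektor von~$B$ via~$V$ in $\Hau_A(\lambda)$ hochheben und dort $(A - \lambda I)^n\vv = (\mu - \lambda)^n\vv = \vec{0}$ ausnutzen. Der einzige Unterschied ist, dass der Artikel deine erste Inklusion gar nicht separat führt: er beruft sich auf Satz~\ref{thm:ev_exist}, wonach $\sigma(B) \not= \emptyset$, sodass aus $\sigma(B) \subseteq \{\lambda\}$ bereits $\sigma(B) = \{\lambda\}$ folgt; dein expliziter Nachweis von $\lambda \in \sigma(B)$ ist also richtig, aber entbehrlich.
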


Die Dreiecksblöcke in~$U$ haben demnach die Form
\begin{equation}
    U_j = \begin{pmatrix}
        \lambda_j &        & *          \\
                  & \ddots &            \\
        0         &        & \lambda_j
    \end{pmatrix}.
\end{equation}
Die Häufigkeit des Auftretens von~$\lambda_j$ auf der Diagonale von~$U$ gleicht somit der Größe des Blocks~$U_j$, also der Dimension des jeweiligen Hauptraums.
Wir führen dafür einen passenden Begriff ein.

\begin{definition}[Vielfachheit]
    Sei~$A \in \C^{n \times n}$ und $\lambda \in \sigma(A)$.
    Dann heißt
    \begin{equation}
        \mult_A(\lambda) \defas \dim \Hau_A(\lambda)
    \end{equation}
    die \textit{Vielfachheit von~$\lambda$}.
\end{definition}

\begin{remark}
    \begin{enumerate}[label=(\alph*)]
        \item Die meisten Lehrbücher zu Linearer Algebra unterscheiden zwischen \textit{algebraischer} und \textit{geometrischer} Vielfachheit von Eigenwerten.
              Algebraische Vielfachheiten, d.\,h. die Vielfachheiten der Nullstellen des charakteristischen Polynoms einer Matrix, stimmen zwar, wenn man den Wert betrachtet, mit unserem Begriff der Vielfachheit überein, sollen konzeptionell hier jedoch keine Rolle spielen, da wir ohne algebraische Begriffe und Hilfsmittel auskommen möchten.
        \item Es gilt $\mult_A(\lambda) \leq n$.
        \item Mit der Zerlegung in~\eqref{eq:basis_trigon_hau} erhalten wir sofort
              \begin{equation}
                  \sum_{\lambda \in \sigma(A)} \mult_A(\lambda) = n
              \end{equation}
              und
              \begin{equation}
                  \trace(A) = \trace(V U V\inv) = \trace(V\inv V U) = \trace(U) = \sum_{\lambda \in \sigma(A)} \lambda \cdot \mult_A(\lambda).
              \end{equation}
    \end{enumerate}
\end{remark}

Damit haben wir das Etappenziel durch eine Kombination von Blockdiagonalisierung und Trigonalisierung erreicht.
Der entsprechende Basiswechsel in~\eqref{eq:basis_trigon_hau} hängt jedoch wiederum unmittelbar von den \emph{beliebigen} Basen der Haupträume für die Blockdiagonalisierung und den \emph{beliebigen} Basisergänzungen zur blockweisen Trigonalisierung ab.
Es besteht also weiterhin Spielraum, um einen Basiswechsel zu einer, im Vergleich zu einer blockweisen Dreiecksmatrix, \enquote{noch diagonaleren} Matrix zu finden, wofür wir schlussendlich \emph{konkretere} Basen der Haupträume suchen sollten.
Mit diesem durchaus ambitionierten Vorhaben begeben wir uns schließlich auf die letzte Etappe unserer Reise.

\subsection{Jordanisierung}

In dieser letzten Etappe möchten wir das Gesamtziel unserer Reise, die Jordansche Normalform einer Matrix~$A \in \C^{n \times n}$, erreichen.
Dazu stellen wir zunächst fest, dass, ausgehend von der vorherigen Etappe, die Anzahl der Nicht-Nulleinträge überhalb der Diagonale eines Dreiecksblocks ein Maß für die \enquote{Kopplung} der Basisvektoren des entsprechenden Hauptraums von~$A$ ist, da sie angibt, \emph{wie viele} Basisvektoren zur Darstellung der Abbildung eines bestimmten Basisvektors durch~$A$ benötigt werden.
Wir können also versuchen, die Basisvektoren der Haupträume \emph{geschickt} aufeinander aufbauen zu lassen, sodass deren Kopplung möglichst lose wird.
Dadurch würde die Anzahl der Nicht-Nulleinträge überhalb der Diagonale der Dreiecksblöcke deutlich verringert.

Da Haupträume von~$A$ invariant sind, ist die Behandlung eines jeden Blocks bzw. Eigenwerts gleich.
Es reicht also aus, wenn wir uns im Folgenden auf einen einzelnen Block bzw. Eigenwert konzentrieren.

Sei also~$\lambda \in \sigma(A)$.
Zunächst definieren wir
\begin{equation}
    L \defas \min \setbr{k \in \N}{\nullsp(A - \lambda I)^{k+1} = \nullsp(A - \lambda I)^k} \leq n
\end{equation}
als die höchstmögliche Stufe von Hauptvektoren zu~$\lambda$.
Dann gelten die Teilmengenrelationen
\begin{equation}
    \label{eq:cascade_eig_hau}
    \Eig_A(\lambda) = \nullsp(A - \lambda I) \subset \nullsp(A - \lambda I)^2 \subset \cdots \subset \nullsp(A - \lambda I)^L = \Hau_A(\lambda).
\end{equation}
Beginnend mit einem Hauptvektor~$\vv_L$ der Stufe~$L$ erzeugen wir nun eine Folge von, wie wir sehen werden, linear unabhängigen Hauptvektoren~$\vv_k$ abfallender Stufen~$k \leq L$, die die gesamte Kaskade an Nullräumen in~\eqref{eq:cascade_eig_hau} ausschöpft, indem wir in jedem Schritt mit~$A - \lambda I$ multiplizieren:
\begin{gather}
    \vv_L \in \nullsp(A - \lambda I)^L \setminus \nullsp(A - \lambda I)^{L-1} \\
    (A - \lambda I)^L \vv_L = \vec{0}
\end{gather}
\vspace{-2\baselineskip}
\begin{center}
    $\big\downarrow$
\end{center}
\vspace{-\baselineskip}
\begin{gather}
    (A - \lambda I)^{L-1} (A - \lambda I) \vv_L = \vec{0} \\
    \vv_{L-1} \defas (A - \lambda I) \vv_L \in \nullsp(A - \lambda I)^{L-1} \setminus \nullsp(A - \lambda I)^{L-2} \\
    (A - \lambda I)^{L-1} \vv_{L-1} = \vec{0}
\end{gather}
\vspace{-2\baselineskip}
\begin{center}
    $\big\downarrow$
\end{center}
\vspace{-\baselineskip}
\begin{gather}
    (A - \lambda I)^{L-2} (A - \lambda I) \vv_{L-1} = \vec{0} \\
    \vv_{L-2} \defas (A - \lambda I) \vv_{L-1} \in \nullsp(A - \lambda I)^{L-2} \setminus \nullsp(A - \lambda I)^{L-3} \\
    (A - \lambda I)^{L-2} \vv_{L-2} = \vec{0}
\end{gather}
\vspace{-2\baselineskip}
\begin{center}
    $\big\downarrow$ \\
    $\vdots$ \\
    $\big\downarrow$
\end{center}
\vspace{-\baselineskip}
\begin{gather}
    (A - \lambda I)^{L-(L-1)} (A - \lambda I) \vv_{L-(L-2)} = \vec{0} \\
    \vv_1 \defas (A - \lambda I) \vv_2 \in \nullsp(A - \lambda I) \setminus \{\vec{0}\} \\
    (A - \lambda I) \vv_1 = \vec{0}
\end{gather}
Tatsächlich ist $\vv_k \not\in \nullsp(A - \lambda I)^{k-1}$, da $\vv_k = (A - \lambda I)^{L-k} \vv_L$ und
\begin{equation}
    (A - \lambda I)^{k-1} \vv_k = (A - \lambda I)^{k-1} (A - \lambda I)^{L-k} \vv_L = (A - \lambda I)^{L-1} \vv_L \not= \vec{0}.
\end{equation}
Per Konstruktion gilt zudem
\begin{alignat}{3}
                          &  & \vv_{k-1} & = (A - \lambda I) \vv_k       \\
    \label{eq:jordan_map}
    \Leftrightarrow \quad &  & A\vv_k    & = \vv_{k-1} + \lambda \vv_k.
\end{alignat}
Wir beobachten, dass \eqref{eq:jordan_map} der Eigenwertgleichung in~\eqref{eq:eig} stark ähnelt.
In Matrix-Schreibweise ergibt sich somit insgesamt
\begin{equation}
    \label{eq:jord_chain_block}
    A \begin{pmatrix}
        |     &        & |     \\
        \vv_1 & \cdots & \vv_L \\
        |     &        & |
    \end{pmatrix}
    =
    \begin{pmatrix}
        |     &        & |     \\
        \vv_1 & \cdots & \vv_L \\
        |     &        & |
    \end{pmatrix}
    \begin{pmatrix}
        \lambda & 1       &        & 0       \\
                & \lambda & \ddots &         \\
                &         & \ddots & 1       \\
        0       &         &        & \lambda \\
    \end{pmatrix}.
\end{equation}
Eine solche Folge $\vv_\ell, \ldots, \vv_1$ von Hauptvektoren verdient einen naheliegenden Begriff.

\begin{definition}[Jordan-Kette]
    Sei $A \in \C^{n \times n}$ und $\lambda \in \sigma(A)$.
    Eine Folge von Hauptvektoren $\vv_\ell, \ldots, \vv_1$ zu~$\lambda$ abfallender Stufe mit $\ell \leq L$, sodass
    \begin{equation}
        \vv_{k-1} = (A - \lambda I) \vv_k \quad \text{für $k = 2, \ldots, \ell$}
    \end{equation}
    heißt \textit{Jordan-Kette zu~$\lambda$}.
\end{definition}

\begin{remark}
    Die Kopplung von Hauptvektoren in einer Jordan-Kette ist demnach denkbar lose.
    In der Tat, die Abbildung jedes Kettenglieds~$\vv_k$ durch~$A$ lässt sich mit lediglich höchstens zwei Kettengliedern, nämlich~$\vv_{k-1}$ und~$\vv_k$, darstellen.
\end{remark}

\begin{example}
    Sei
    \begin{equation}
        A \defas \begin{pmatrix}
            2  & 1 & 1  \\
            -4 & 5 & 4  \\
            1  & 0 & 2
        \end{pmatrix}
    \end{equation}
    mit $\sigma(A) = \{3\}$.
    Für $\lambda \defas 3$ gilt
    \begin{equation}
        A - \lambda I =
        \begin{pmatrix}
            -1 & 1 & 1   \\
            -4 & 2 & 4   \\
            1  & 0 & -1
        \end{pmatrix},
        \quad
        (A - \lambda I)^2 =
        \begin{pmatrix}
            -2 & 1 & 2  \\
            0  & 0 & 0  \\
            -2 & 1 & 2
        \end{pmatrix},
        \quad
        (A - \lambda I)^3 =
        \begin{pmatrix}
            0 & 0 & 0  \\
            0 & 0 & 0  \\
            0 & 0 & 0
        \end{pmatrix}
    \end{equation}
    und
    \begin{equation}
        \nullsp(A - \lambda I) = \C \begin{pmatrix}1 \\ 0 \\ 1\end{pmatrix},\quad
        \nullsp(A - \lambda I)^2 = \C \begin{pmatrix}1 \\ 0 \\ 1\end{pmatrix} + \C \begin{pmatrix}1 \\ 2 \\ 0\end{pmatrix}
        ,\quad
        \nullsp(A - \lambda I)^3 = \C^3.
    \end{equation}
    
    Wir beginnen mit einem Hauptvektor der höchsten Stufe $L=3$
    \begin{gather}
        \vv_3 \defas \begin{pmatrix}1 \\ 0 \\ 0\end{pmatrix} \in \nullsp(A - \lambda I)^3 \setminus \nullsp(A - \lambda I)^2
    \end{gather}
    und erzeugen die zugehörige Jordan-Kette
    \begin{align}
        \vv_2 & = (A - \lambda I) \vv_3 = \begin{pmatrix}-1 \\ -4 \\ 1\end{pmatrix},  \\
        \vv_1 & = (A - \lambda I) \vv_2 = \begin{pmatrix}-2 \\ 0 \\ -2\end{pmatrix}.
    \end{align}
    
    Damit gilt
    \begin{equation}
        A \begin{pmatrix}
            -2 & -1 & 1  \\
            0  & -4 & 0  \\
            -2 & 1  & 0
        \end{pmatrix}
        =
        \begin{pmatrix}
            -2 & -1 & 1  \\
            0  & -4 & 0  \\
            -2 & 1  & 0
        \end{pmatrix}
        \begin{pmatrix}
            3 & 1 & 0  \\
            0 & 3 & 1  \\
            0 & 0 & 3
        \end{pmatrix}.
    \end{equation}
\end{example}

Eine einzelne Jordan-Kette ist linear unabhängig per Konstruktion, da deren Glieder aus unterschiedlichen Bereichen des entsprechenden Hauptraums stammen.

\begin{proposition}
    \label{prop:jord_chain_indep}
    Sei~$A \in \C^{n \times n}$ und~$\lambda \in \sigma(A)$.
    Eine Jordan-Kette zu~$\lambda$ ist linear unabhängig.
\end{proposition}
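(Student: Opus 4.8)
The plan is to show directly that a linear combination of the chain members vanishing forces all coefficients to be zero, exploiting the fact that the members live in a strictly increasing cascade of nullspaces. Let the Jordan chain be $\vv_\ell, \ldots, \vv_1$ to $\lambda$, so that $\vv_{k-1} = (A - \lambda I)\vv_k$ for $k = 2, \ldots, \ell$, and recall from the construction preceding the definition that $\vv_k \in \nullsp(A - \lambda I)^k \setminus \nullsp(A - \lambda I)^{k-1}$, i.e. $\vv_k$ is a Hauptvektor of level exactly $k$. Suppose $\alpha_1 \vv_1 + \cdots + \alpha_\ell \vv_\ell = \vec{0}$ with $\alpha_1, \ldots, \alpha_\ell \in \C$.

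The key step is an iterated application of powers of $A - \lambda I$ to peel off the coefficients one at a time, starting from the top. Apply $(A - \lambda I)^{\ell - 1}$ to the relation. Since $\vv_k \in \nullsp(A - \lambda I)^k \subseteq \nullsp(A - \lambda I)^{\ell - 1}$ for every $k \leq \ell - 1$, all terms except the last are annihilated, leaving $\alpha_\ell (A - \lambda I)^{\ell - 1}\vv_\ell = \vec{0}$. But $(A - \lambda I)^{\ell - 1}\vv_\ell \neq \vec{0}$ precisely because $\vv_\ell \notin \nullsp(A - \lambda I)^{\ell - 1}$, so $\alpha_\ell = 0$. Now the relation reads $\alpha_1 \vv_1 + \cdots + \alpha_{\ell - 1}\vv_{\ell - 1} = \vec{0}$; applying $(A - \lambda I)^{\ell - 2}$ and the same reasoning gives $\alpha_{\ell - 1} = 0$, and iterating downward yields $\alpha_k = 0$ for all $k$. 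Hence the chain is linearly independent.

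I do not expect a genuine obstacle here; the only point requiring a little care is bookkeeping the exponents, namely checking that $(A - \lambda I)^{k-1}\vv_k \neq \vec{0}$ while $(A - \lambda I)^{k-1}\vv_j = \vec{0}$ for $j < k$ — both of which are immediate from the membership $\vv_j \in \nullsp(A - \lambda I)^j$ established during the construction of the chain. Alternatively one could phrase the same argument as an induction on $\ell$: strip off $\vv_1 = (A - \lambda I)\vv_2$ is an eigenvector hence nonzero, apply $A - \lambda I$ to the whole relation to shift the chain down by one and kill $\alpha_1$, then invoke the inductive hypothesis on the shorter chain $\vv_{\ell - 1}, \ldots, \vv_1$ — but the top-down peeling via powers is the cleaner write-up.
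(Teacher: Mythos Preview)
Your proposal is correct and is essentially the same argument as the paper's: both assume a vanishing linear combination, apply $(A-\lambda I)^{\ell-1}$ to isolate the top coefficient $\alpha_\ell$ (using that $(A-\lambda I)^{\ell-1}\vv_\ell\neq\vec{0}$ while all lower-level members are killed), and then iterate downward. The only cosmetic difference is that the paper first rewrites each $\vv_k$ as $(A-\lambda I)^{\ell-k}\vv_\ell$ before multiplying, whereas you invoke the level information $\vv_k\in\nullsp(A-\lambda I)^k$ directly; the substance is identical.
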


In~\eqref{eq:jord_chain_block} können wir zudem beobachten, dass eine einzelne Jordan-Kette, die noch keine Basis des entsprechenden Hauptraums bildet, bereits zu einem Block führt.

\begin{proposition}
    \label{prop:jord_chain_inv}
    Sei $A \in \C^{n \times n}$ und $\lambda \in \sigma(A)$.
    Eine Jordan-Kette zu~$\lambda$ spannt einen $A$-invarianten Untervektorraum auf.
\end{proposition}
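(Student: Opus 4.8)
The plan is to show directly that $U \defas \linspan(\{\vv_1, \ldots, \vv_\ell\})$ is mapped into itself by $A$, using only the defining relation of a Jordan chain. Since $U$ is spanned by the chain members, it suffices by linearity of $A$ to check that $A\vv_k \in U$ for each $k = 1, \ldots, \ell$. Here the key identity is the reformulation already derived in the text, namely \eqref{eq:jordan_map}: from $\vv_{k-1} = (A - \lambda I)\vv_k$ we obtain $A\vv_k = \vv_{k-1} + \lambda\vv_k$ for $k = 2, \ldots, \ell$, and for the bottom member $\vv_1$ we have $A\vv_1 = \lambda\vv_1$ since $\vv_1 \in \nullsp(A - \lambda I)$.

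First I would treat the base of the chain: $\vv_1$ is a Hauptvektor of level $1$, i.e. $\vv_1 \in \Eig_A(\lambda)$, so $A\vv_1 = \lambda\vv_1 \in \linspan(\{\vv_1\}) \subseteq U$. Next, for each $k \in \{2, \ldots, \ell\}$, the chain relation gives $A\vv_k = \vv_{k-1} + \lambda\vv_k$, and both $\vv_{k-1}$ and $\vv_k$ lie in $U$ by definition of $U$; hence $A\vv_k \in U$. Finally, for an arbitrary $\uv = \sum_{k=1}^{\ell} \alpha_k \vv_k \in U$ I would invoke linearity to conclude $A\uv = \sum_{k=1}^{\ell} \alpha_k (A\vv_k) \in U$, since each summand lies in the subspace $U$ and $U$ is closed under sums and scalar multiples. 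This is exactly the $A$-invariance of $U$.

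There is essentially no obstacle here — the statement is an immediate consequence of the construction of a Jordan chain, and in fact \eqref{eq:jord_chain_block} already exhibits the invariance pictorially, since the matrix on the right-hand side has columns supported on the index set of the chain. The only point deserving a word of care is the handling of the first chain member separately from the rest, because the relation $\vv_{k-1} = (A - \lambda I)\vv_k$ is only stated for $k \geq 2$; the level-$1$ property of $\vv_1$ supplies the missing case and closes the argument.
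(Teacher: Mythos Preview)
Your proof is correct and follows essentially the same approach as the paper's: both use the chain relation $A\vv_1 = \lambda\vv_1$ and $A\vv_k = \vv_{k-1} + \lambda\vv_k$ for $k \geq 2$ to see that each $A\vv_k$ lies in the span, and then extend to an arbitrary linear combination. The only cosmetic difference is that the paper writes out the new coefficients $\tilde{\alpha}_k = \lambda\alpha_k + \alpha_{k+1}$ (with $\tilde{\alpha}_\ell = \lambda\alpha_\ell$) explicitly, whereas you argue more structurally via closure of the subspace.
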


Neben der ersten Jordan-Kette der Länge~$L$, die bei einem Hauptvektor der höchsten Stufe~$L$ beginnt, kann es noch weitere, wie wir sehen werden, linear unabhängige Jordan-Ketten zu~$\lambda$ geben, welche jedoch im allgemeinen bei Hauptvektoren einer Stufe~$\ell \leq L$ beginnen und somit Länge~$\ell$ haben.
Über die Längen dieser weiteren Jordan-Ketten zu~$\lambda$ ist keine allgemeine Aussage möglich; sie hängt von der Anzahl linear unabhängiger Hauptvektoren der einzelnen Stufen, und damit von~$A$, ab.

Ausgehend von bereits erzeugten Jordan-Ketten wird eine weitere Jordan-Kette zu~$\lambda$ erzeugt, indem man einen Hauptvektor zu~$\lambda$ höchstmöglicher Stufe findet, der allerdings von den Hauptvektoren zu~$\lambda$ gleicher Stufe, die bereits in einer Jordan-Kette verwendet wurden, linear unabhängig ist.
Bei diesem Hauptvektor beginnt dann die weitere Jordan-Kette zu~$\lambda$ und wird, wie gehabt, durch schrittweise Multiplikation mit $A - \lambda I$ bis zu einem Eigenvektor fortgesetzt.

Wurden alle Jordan-Ketten zu~$\lambda$ erzeugt, steht uns an dieser Stelle eine Anzahl von $\dim \Hau_A(\lambda)$ Hauptvektoren als vielversprechende Kandidaten zur Bildung der gesuchten lose gekoppelten Basis von~$\Hau_A(\lambda)$ zur Verfügung; lediglich deren lineare Unabhängigkeit steht noch im Raum.

Wir haben bereits gesehen, dass eine einzelne Jordan-Kette zu~$\lambda$ linear unabhängig ist (Prop.~\ref{prop:jord_chain_indep}).
Wir können diese Aussage jedoch, ohne größere Hürden, auf \emph{disjunkte} Jordan-Ketten zu~$\lambda$, die bei linear unabhängigen Hauptvektoren zu~$\lambda$ beginnen, erweitern.
Die Konkretisierung mittels des Beiworts \enquote{disjunkt} ist notwendig, um auszuschließen, dass Jordan-Ketten zusammen mit Verkürzungen, d.\,h. abgeschnittenen Teilen, von ihnen betrachtet werden.

\begin{proposition}
    Sei $A \in \C^{n \times n}$ und $\lambda \in \sigma(A)$.
    Disjunkte Jordan-Ketten zu~$\lambda$, die bei linear unabhängigen Hauptvektoren beginnen, sind linear unabhängig.
\end{proposition}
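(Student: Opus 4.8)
The plan is to induct on the largest chain length occurring. Write $N \defas A - \lambda I$ and let the given chains be $C_1, \dots, C_r$ with $C_i = (\vv^{(i)}_{\ell_i}, \dots, \vv^{(i)}_1)$, so that $\vv^{(i)}_{k-1} = N\vv^{(i)}_k$ and $\vv^{(i)}_k \in \nullsp(N^k) \setminus \nullsp(N^{k-1})$; the $C_i$ are pairwise disjoint, the starting vectors $\vv^{(1)}_{\ell_1}, \dots, \vv^{(r)}_{\ell_r}$ are linearly independent, and put $L \defas \max_i \ell_i$. For $L = 1$ every chain is a single eigenvector, namely its own starting vector, so the claim is exactly the hypothesis.

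For the inductive step I would start from a vanishing linear combination $\sum_{i=1}^r \sum_{k=1}^{\ell_i} c^{(i)}_k \vv^{(i)}_k = \vec{0}$ and apply $N$ to it. Since $N\vv^{(i)}_1 = \vec{0}$ while $N\vv^{(i)}_k = \vv^{(i)}_{k-1}$ for $k \ge 2$, this yields $\sum_i \sum_{k=2}^{\ell_i} c^{(i)}_k \vv^{(i)}_{k-1} = \vec{0}$, a vanishing linear combination of the members of the chains $(\vv^{(i)}_{\ell_i-1}, \dots, \vv^{(i)}_1)$ obtained by deleting the top of each $C_i$ (chains of length $1$ drop out). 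Because $N$ lowers the stage of a nonzero principal vector by exactly one, these truncated chains are again Jordan chains to $\lambda$, pairwise disjoint, of maximal length $L-1$, and starting at $N\vv^{(i)}_{\ell_i}$. Granting that they, too, start at linearly independent principal vectors, the induction hypothesis forces $c^{(i)}_k = 0$ for all $k \ge 2$, so the original relation collapses to $\sum_i c^{(i)}_1 \vv^{(i)}_1 = \vec{0}$, a relation among the eigenvectors at the chain bottoms alone, which I would then argue is trivial.

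The step I expect to take the most care is recovering the phrase ``starts at linearly independent principal vectors'' in these two places --- for the truncated chains in the step, and for the bottom eigenvectors at the end. Plain linear independence of the whole family of starting vectors does not by itself suffice, since a linear combination of starting vectors all of one stage $s$ could a priori land in $\nullsp(N^{s-1})$. The natural remedy is to strengthen the statement carried through the induction and keep track, for each stage $s$, of the fact that the vectors $\vv^{(i)}_s$ (over all $i$ with $\ell_i \ge s$) are linearly independent modulo $\nullsp(N^{s-1})$ --- equivalently, have independent images in $\nullsp(N^s)/\nullsp(N^{s-1})$. This is precisely the property the choice of starting vectors in the construction is meant to secure; it is inherited both when one deletes the top vectors and when one applies $N$ (this is where disjointness and that careful choice pull their weight), and for $s = 1$ it is literally linear independence of the eigenvectors $\vv^{(i)}_1$. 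With this invariant available the descent closes, and the proposition follows.
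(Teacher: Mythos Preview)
Your inductive organization is essentially the paper's approach in different packaging: the paper's entire proof reads ``Analog zum Beweis von Prop.~\ref{prop:jord_chain_indep}'', meaning one multiplies the vanishing relation by $(A-\lambda I)^{L-1}$, then by $(A-\lambda I)^{L-2}$, and so on, peeling off the highest-stage coefficients layer by layer. Applying $N$ once and invoking an induction hypothesis on the truncated chains is the same mechanism, phrased recursively rather than unrolled.

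Where your proposal goes beyond the paper is in your third paragraph, and you are right to linger there. The hypothesis as literally stated --- disjoint chains with linearly independent starting vectors --- is too weak: take $N = A - \lambda I$ on $\C^3$ with $N\vec{e}_1 = \vec{0}$ and $N\vec{e}_2 = N\vec{e}_3 = \vec{e}_1$; the chains $(\vec{e}_2,\vec{e}_1)$ and $(2\vec{e}_3,2\vec{e}_1)$ are disjoint and start at independent vectors, yet their union is dependent. Both the paper's ``analogous'' argument and your induction stall at exactly the spot you name: after multiplying by $N^{L-1}$ (equivalently, at the base of the recursion) one is left with a relation among the bottom eigenvectors $\vv^{(i)}_1$, whose independence does not follow from independence of the tops alone. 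Your proposed remedy --- carry through the induction the stronger invariant that for each stage $s$ the stage-$s$ members are independent modulo $\nullsp(N^{s-1})$ --- is the correct repair and is precisely what a carefully executed construction of the chains actually supplies; the paper's informal description of that construction and its one-line proof simply gloss over this point. With that strengthened invariant in hand your argument closes cleanly.
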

\begin{proof}
    Analog zum Beweis von Prop.~\ref{prop:jord_chain_indep}.
\end{proof}

\begin{remark}
    \begin{enumerate}[label=(\alph*)]
        \item Die Anzahl linear unabhängiger Hauptvektoren zu~$\lambda$ der Stufe~$\ell \leq L$ ist
              \begin{equation}
                  \dim \nullsp(A - \lambda I)^\ell - \dim \nullsp(A - \lambda I)^{\ell-1}.
              \end{equation}
        \item Die Anzahl linear unabhängiger Jordan-Ketten zu~$\lambda$ ist $\dim \Eig_A(\lambda)$.
        \item Die Summe der Längen linear unabhängiger Jordan-Ketten zu~$\lambda$ ist $\dim \Hau_A(\lambda)$.
    \end{enumerate}
\end{remark}

Jordan-Ketten zu~$\lambda$ bilden also tatsächlich die gesuchte lose gekoppelte Basis des Hauptraums zu~$\lambda$.
Enthält die Matrix~$V_\lambda$ eine solche Basis aus Jordan-Ketten zu~$\lambda$, dann gilt
\begin{equation}
    A V_\lambda = V_\lambda J_\lambda    
\end{equation}
für die Blockmatrix
\begin{equation}
    J_\lambda
    =
    \begin{pmatrix}
        \drawmatrix{J_{\lambda,1}} &        & 0                                   \\
                                   & \ddots &                                     \\
        0                          &        & \drawmatrix{J_{\lambda, d_\lambda}}
    \end{pmatrix}
\end{equation}
mit $d_\lambda \defas \dim \Eig_A(\lambda)$, wobei jeder Jordan-Kette zu~$\lambda$ ein \textit{Jordan-Block zu~$\lambda$}
\begin{equation}
    J_{\lambda,*} \defas
    \begin{pmatrix}
        \lambda & 1       &        & 0        \\
                & \lambda & \ddots &          \\
                &         & \ddots & 1        \\
        0       &         &        & \lambda
    \end{pmatrix}
\end{equation}
der Größe ihrer Länge entspricht.

\begin{example}
    Sei~$A$ eine Matrix mit einem einzelnen Eigenwert~$\lambda \in \C$ und folgenden Jordan-Ketten zu~$\lambda$:
    \begin{equation*}
        \renewcommand{\arraystretch}{1.5}
        \begin{array}{c|ccc}
                           & \text{Jordan-Kette~1} & \text{Jordan-Kette~2} & \text{Jordan-Kette~3} \\ \hline
            \text{Stufe 5} & \vv_5^{(1)}           &                       &                       \\
                           & \downarrow            &                       &                       \\
            \text{Stufe 4} & \vv_4^{(1)}           &                       &                       \\
                           & \downarrow            &                       &                       \\
            \text{Stufe 3} & \vv_3^{(1)}           & \vv_3^{(2)}           &                       \\
                           & \downarrow            & \downarrow            &                       \\
            \text{Stufe 2} & \vv_2^{(1)}           & \vv_2^{(2)}           & \vv_2^{(3)}           \\
                           & \downarrow            & \downarrow            & \downarrow            \\
            \text{Stufe 1} & \vv_1^{(1)}           & \vv_1^{(2)}           & \vv_1^{(3)}
        \end{array}
    \end{equation*}
    Dann gilt
    \begin{equation*}
        J_\lambda = \left(
        \begin{array}{cccccccccc}
            \cline{1-5}
            \multicolumn{1}{|c}{\lambda} & 1       &         &         & \multicolumn{1}{c|}{0}        &                              &         &                              &                              &                              \\
            \multicolumn{1}{|c}{}        & \lambda & 1       &         & \multicolumn{1}{c|}{}         &                              &         &                              & 0                            &                              \\
            \multicolumn{1}{|c}{}        &         & \lambda & 1       & \multicolumn{1}{c|}{}         &                              &         &                              &                                                             \\
            \multicolumn{1}{|c}{}        &         &         & \lambda & \multicolumn{1}{c|}{1}        &                              &         &                              &                                                             \\
            \multicolumn{1}{|c}{0}       &         &         &         & \multicolumn{1}{c|}{\lambda } &                              &         &                              &                                                             \\ \cline{1-8}
                                         &         &         &         &                               & \multicolumn{1}{|c}{\lambda} & 1       & \multicolumn{1}{c|}{0}       &                              &                              \\
                                         &         &         &         &                               & \multicolumn{1}{|c}{}        & \lambda & \multicolumn{1}{c|}{1}       &                              &                              \\
                                         &         &         &         &                               & \multicolumn{1}{|c}{0}       &         & \multicolumn{1}{c|}{\lambda} &                              &                              \\ \cline{6-10}
                                         & 0       &         &         &                               &                              &         &                              & \multicolumn{1}{|c}{\lambda} & \multicolumn{1}{c|}{1}       \\
                                         &         &         &         &                               &                              &         &                              & \multicolumn{1}{|c}{0}       & \multicolumn{1}{c|}{\lambda} \\ \cline{9-10}
        \end{array}
        \right).
    \end{equation*}
\end{example}

Wir betrachten nun die Gesamtsituation mit allen Eigenwerten von~$A$.
Die Blockmatrizen~$J_{\lambda_j}$ der verschiedenen Eigenwerte $\lambda_j$ von~$A$ setzen sich, aufgrund der Invarianz der Haupträume von~$A$, \emph{blockweise} zusammen zu einer \textit{Jordan-Matrix}
\begin{equation}
    J \defas \begin{pmatrix}
        \drawmatrix{J_{\lambda_1}} &        & 0                          \\
                                   & \ddots &                            \\
        0                          &        & \drawmatrix{J_{\lambda_m}}
    \end{pmatrix},
\end{equation}
welche als \textit{Jordansche Normalform von~$A$} bezeichnet wird.
Die Jordansche Normalform von~$A$ hat also die Eigenwerte von~$A$ gemäß ihrer Vielfachheiten auf der Diagonale und durch Nullen getrennte Ketten von Einsen verschiedener Längen auf der oberen Nebendiagonale.

Jede Matrix~$A \in \C^{n \times n}$ ist somit ähnlich zu einer Jordan-Matrix~$J \in \C^{n \times n}$.
Die entsprechende Zerlegung
\begin{equation}
    A = V J V\inv,
\end{equation}
wobei die invertierbare (jedoch nicht eindeutige) Matrix~$V \in \C^{n \times n}$ passende Jordan-Ketten als Basen der Haupträume von~$A$ enthält, heißt \textit{Jordan-Zerlegung}.

Hiermit sind wir am Ende unserer Reise angekommen.
Es sind also die Jordan-Ketten zu Eigenwerten von~$A$, die einen Basiswechsel auf eine \enquote{möglichst diagonale} Matrix ermöglichen und uns somit Einsicht in die Geometrie von~$A$ gewähren.
Ein Versuch, die Reise in der Hoffnung auf einen \enquote{noch besseren} Basiswechsel zu einer, im Vergleich zu einer Jordan-Matrix, \enquote{noch diagonaleren} Matrix fortzusetzen, ist vergebens, denn die Jordansche Normalform ist in der Tat die \enquote{diagonalste} aller möglichen.

\subsection*{Reisenachbereitung}

In Tab.~\ref{tab:analogy_termi} möchte der Autor im Nachgang unserer Reise noch auf Analogien von Begriffen, die während der Reise eingeführt wurden, hinweisen und sie so als Reiseerfahrung für die Gereisten festhalten.

\begin{table}
    \centering
    \begin{tabular}{ccccc}
        Trigonalisierung & --- & Schur-Zerlegung  & --- & Schursche Normalform   \\
        $\updownarrow$   &     & $\updownarrow$   &     & $\updownarrow$         \\
        Jordanisierung   & --- & Jordan-Zerlegung & --- & Jordansche Normalform
    \end{tabular}
    \caption{Analogien von Begriffen als Reiseerfahrung.}
    \label{tab:analogy_termi}
\end{table}

\section{Abschluss}

Abschließend lassen wir die Erlebnisse unserer Reise noch einmal Revue passieren.
Nachdem wir anfangs die Existenz von nicht-diagonalisierbaren Matrizen festgestellt hatten, haben wir, in der Hoffnung auf einen Basiswechsel, der uns die Geometrie solcher Matrizen dennoch bestmöglich verstehen lässt, den Aufbruch zur Jordanschen Normalform gewagt.
Die erste Etappe lieferte uns bereits eine Trigonalisierung, die durch die Existenz eines Eigenwerts und die Eigenschaft eines zugehörigen Eigenvektors, auf ein Vielfaches von sich selbst abzubilden, immerhin schrittweise Nullen unterhalb der Diagonale erzeugt hat.
Der Begriff der Invarianz und seine geometrische Bedeutung für eine Blockdiagonalisierung hat uns in der zweiten Etappe Haupträume als kanonische invariante Erweiterung von Eigenräumen erleben und mit einem Beispiel einprägen lassen.
Nach einem kurzen Umweg, den wir dafür gehen mussten, wurden wir in der dritten Etappe durch eine blockweise Trigonalisierung mit den Erfahrungen der ersten beiden Etappen zurück auf den richtigen Pfad geführt, sodass wir den Begriff der Vielfachheit und seinen Zusammenhang mit der Spur einer Matrix auf natürliche Weise einführen konnten.
Auf der vierten und letzten Etappe konnten wir schließlich konkretere \enquote{lose gekoppelte} Basen von Haupträumen in Form von Jordan-Ketten als Folge von Hauptvektoren abfallender Stufe ausfindig machen, die uns letztendlich an das Ziel unserer Reise brachten und einen krönenden Abschluss derselbigen darstellten.

\begin{appendices}

    \section{Beweise}
    \label{sec:app_proofs}
    
    Die Beweise von
    \begin{center}    
        \begin{itemize*}
            \item Satz~\ref{thm:ev_exist}
        \end{itemize*}
        \quad
        \begin{itemize*}
            \item Lemma~\ref{lem:hau_max_level}
        \end{itemize*}
        \quad
        \begin{itemize*}
            \item Prop.~\ref{prop:hau_span}
        \end{itemize*}
        \quad
        \begin{itemize*}
            \item Prop.~\ref{prop:hau_indep}
        \end{itemize*}
        \quad
        \begin{itemize*}
            \item Prop.~\ref{prop:hau_eig}
        \end{itemize*}
    \end{center}
    wurden Texten von Sheldon Axler~\cite{axler1995down,axler2024linear} entnommen und auf den mathematisch angewandteren Kontext dieses Artikels angepasst.
    Insbesondere für den Beweis von Prop.~\ref{prop:hau_span} musste etwas mehr Hand angelegt werden.
    
    \subsection{Satz~\ref{thm:ev_exist}}
    \label{ssec:app_proof_thm_ev_exist}
    
    \begin{proof}
        Sei~$\vv \in \C^n$ mit $\vv \not= \vec{0}$.
        Die Vektoren $\vv, A\vv, A^2\vv, \ldots, A^n\vv$ müssen linear abhängig sein, da~$n+1$ Vektoren in einem $n$-dimensionalen Vektorraum nicht linear unabhängig sein können.
        Es gibt also eine Linearkombination
        \begin{equation}
            \alpha_0\vv + \alpha_1 A\vv + \cdots + \alpha_n A^n\vv = \vec{0},
        \end{equation}
        in der nicht alle~Koeffizienten~$\alpha_j \in \C$ gleich Null sind.
        Insbesondere ist im Fall $\alpha_0 \not= 0$ noch mindestens ein weiterer Koeffizient ungleich Null, da $\vv \not= \vec{0}$.
        Wir definieren das (nicht-konstante) komplexe Polynom
        \begin{equation}
            P(z) \defas \alpha_0 + \alpha_1 z + \cdots + \alpha_n z^n,
        \end{equation}
        sodass $P(A)\vv = \vec{0}$.
        
        Nach dem Fundamentalsatz der Algebra zerfällt~$P$ vollständig in Linearfaktoren, d.\,h.
        \begin{equation}
            P(z) = c (z - \lambda_1) \cdots (z - \lambda_d)
        \end{equation}
        für $d \defas \deg P \geq 1$, $c \defas \alpha_d \not= 0$ und $\lambda_1, \ldots, \lambda_d \in \C$.
        Es gilt also
        \begin{equation}
            c (A - \lambda_1 I) \cdots (A - \lambda_d I) \vv = P(A)\vv = \vec{0}.
        \end{equation}
        Da $c \not= 0$ und $\vv \not= \vec{0}$, muss es somit ein~$\wv \not= \vec{0}$ und $\ell \in \{1, \ldots, d\}$ geben, sodass
        \begin{equation}
            (A - \lambda_\ell I)\wv = \vec{0}.
        \end{equation}
        Der Vektor~$\wv$ ist dann Eigenvektor zum Eigenwert~$\lambda_\ell$.
    \end{proof}
    
    \subsection{Lemma~\ref{lem:hau_max_level}}
    
    \begin{proof}
        Sei $\vv \in \Hau_A(\lambda)$, d.\,h. $(A - \lambda I)^n \vv = \vec{0}$.
        Da
        \begin{equation}
            (A - \lambda I)^m \vv = (A - \lambda I)^{m-n} (A - \lambda I)^n \vv = (A - \lambda I)^{m-n} \cdot \vec{0} = \vec{0},
        \end{equation}
        ist $\vv \in \nullsp(A - \lambda I)^m$.
        Also ist $\Hau_A(\lambda) \subseteq \nullsp(A - \lambda I)^m$.
        
        Sei nun $\vv \in \nullsp(A - \lambda I)^m$ mit $\vv \not= \vec{0}$ und $k \leq m$ die Stufe von~$\vv$, d.\,h. die kleinste natürliche Zahl, sodass $(A - \lambda I)^k \vv = \vec{0}$.
        Wir zeigen, dass $k \leq n$, indem wir die lineare Unabhängigkeit von
        \begin{equation}
            \label{eq:lem_max_level_pow}
            \vv, (A - \lambda I) \vv, \ldots, (A - \lambda I)^{k-1} \vv
        \end{equation}
        nachweisen.
        Seien also $\alpha_0, \alpha_1, \ldots, \alpha_{k-1} \in \C$, sodass
        \begin{equation}
            \label{eq:lem_max_level_pow_indep}
            \alpha_0 \vv + \alpha_1 (A - \lambda I) \vv + \cdots + \alpha_{k-1} (A - \lambda I)^{k-1} \vv = \vec{0}.
        \end{equation}
        Eine Multiplikation dieser Gleichung mit $(A - \lambda I)^{k-1}$ ergibt
        \begin{alignat}{3}
                                  &  & \alpha_0 (A - \lambda I)^{k-1} \vv + \alpha_1 (A - \lambda I)^k \vv + \cdots + \alpha_{k-1} (A - \lambda I)^{2(k-1)} \vv & = \vec{0}   \\
            \Leftrightarrow \quad &  & \alpha_0 (A - \lambda I)^{k-1} \vv                                                                                       & = \vec{0}.
        \end{alignat}
        Da $(A - \lambda I)^{k-1} \vv \not= \vec{0}$, folgt $\alpha_0 = 0$.
        
        Analog können wir durch weitere Multiplikationen von~\eqref{eq:lem_max_level_pow_indep} mit
        \begin{equation}
            (A - \lambda I)^{k-2}, \ldots, (A - \lambda I), I
        \end{equation}
        zeigen, dass auch $\alpha_1 = \cdots = \alpha_{k-1} = 0$.
        Also haben wir in~\eqref{eq:lem_max_level_pow} $k$~linear unabhängige Vektoren in einem $n$-dimensionalen Vektorraum, woraus $k \leq n$ folgt.
        Somit ist $\vv \in \nullsp(A - \lambda I)^n = \Hau_A(\lambda)$ und $\nullsp(A - \lambda I)^m \subseteq \Hau_A(\lambda)$.
    \end{proof}
    
    \subsection{Lemma~\ref{lem:nullsp_pow_end}}
    
    \begin{proof}
        Da die Aussage für~$k=1$ trivial ist, nehmen wir~$k \geq 2$ an.
        
        Aus Bem.~\ref{rem:hau}\ref{it:nullsp_pow_subset} folgt bereits $\nullsp(A - \lambda I)^m \subseteq \nullsp(A - \lambda I)^{m+k}$.
        
        Sei also $\vv \in \nullsp(A - \lambda I)^{m+k}$, d.\,h. $(A - \lambda I)^{m+k} \vv = \vec{0}$.
        Falls nun $\nullsp(A - \lambda I)^{m+1} = \nullsp(A - \lambda I)^m$, dann gilt
        \begin{equation}
            \begin{split}
                \vec{0} & = (A - \lambda I)^{m+k} \vv                       \\
                        & = (A - \lambda I)^{m+1} (A - \lambda I)^{k-1} \vv \\
                        & = (A - \lambda I)^m (A - \lambda I)^{k-1} \vv     \\
                        & = (A - \lambda I)^{m+k-1} \vv                     \\
                        & = \cdots                                          \\
                        & = (A - \lambda I)^{m} \vv.
            \end{split}
        \end{equation}
        Somit ist $\vv \in \nullsp(A - \lambda I)^{m}$ und $\nullsp(A - \lambda I)^{m+k} \subseteq \nullsp(A - \lambda I)^m$.
    \end{proof}
    
    \subsection{Proposition~\ref{prop:hau_span}}
    
    \begin{proof}
        Sei $\lambda \in \sigma(A)$ (Satz~\ref{thm:ev_exist}).
        
        Wir zeigen zunächst, dass
        \begin{equation}
            \label{eq:Cn_hau_colsp}
            \C^n = \nullsp(A - \lambda I)^n \oplus \colsp(A - \lambda I)^n.
        \end{equation}
        Denn dann müssen wir nur noch zeigen, dass Hauptvektoren von~$A$ auch $\colsp(A - \lambda I)^n$ aufspannen.
        Sei also $\vv \in \nullsp(A - \lambda I)^n \cap \colsp(A - \lambda I)^n$.
        Da $\vv \in \colsp(A - \lambda I)^n$, gibt es ein $\wv \in \C^n$, sodass $(A - \lambda I)^n \wv = \vv$.
        Mit Lemma~\ref{lem:hau_max_level} gilt dann
        \begin{alignat}{3}
                                  &  & (A - \lambda I)^n \vv    & = \vec{0}   \\
            \Leftrightarrow \quad &  & (A - \lambda I)^{2n} \wv & = \vec{0}   \\
            \Leftrightarrow \quad &  & (A - \lambda I)^{n} \wv  & = \vec{0}   \\
            \Leftrightarrow \quad &  & \vv                      & = \vec{0},
        \end{alignat}
        also $\nullsp(A - \lambda I)^n \cap \colsp(A - \lambda I)^n = \{\vec{0}\}$.
        Der Rangsatz liefert schließlich~\eqref{eq:Cn_hau_colsp}.
        
        Wir zeigen nun, dass Hauptvektoren von~$A$ auch $U \defas \colsp(A - \lambda I)^n$ aufspannen.
        Mit dem Rangsatz gilt zunächst, dass $k \defas \dim U < n$, da~$\lambda \in \sigma(A)$ und somit $\dim \nullsp(A - \lambda I)^n \geq 1$.
        Der Untervektorraum~$U$ ist zudem $A$-invariant.
        In der Tat, für alle $\uv \in U$ gibt es ein $\wv \in \C^n$, sodass $(A - \lambda I)^n \wv = \uv$ und es gilt
        \begin{equation}
            A\uv = A (A - \lambda I)^n \wv = (A - \lambda I)^n A\wv \in U.
        \end{equation}
        Sei nun $B \in \C^{n \times k}$, bestehend aus Basisvektoren von~$U$.
        Dann gibt es aufgrund der Invarianz von~$U$ ein $A' \in \C^{k \times k}$, sodass
        \begin{equation}
            AB = BA'.
        \end{equation}
        Wenn die zu beweisende Aussage bereits für $k < n$ stimmte, dann stimmt sie auch für~$n$, da~$U$ dann durch Hauptvektoren von~$A$ aufgespannt würde.
        In der Tat, seien $\wv_1, \ldots, \wv_m \in \C^k$, $m \in \N$, Hauptvektoren von~$A'$ zu $\lambda'_1, \ldots, \lambda'_m \in \sigma(A')$, sodass $\linspan(\{\wv_1, \ldots, \wv_m\}) = \C^k$.
        Dann sind
        \begin{equation}
            \uv_j \defas B\wv_j \in U, \quad j = 1, \ldots, m,
        \end{equation}
        Hauptvektoren von~$A$ mit $\linspan(\{\uv_1, \ldots, \uv_m\}) = U$.
        Denn einerseits gilt
        \begin{equation}
            (A - \lambda' I_n) B = AB - \lambda' B = BA' - \lambda' B = B (A' - \lambda' I_k) \quad \text{für $\lambda' \in \C$},
        \end{equation}
        womit
        \begin{equation}
            (A - \lambda_j' I)^n \uv_j = (A - \lambda_j' I)^n B\wv_j = B (A' - \lambda_j' I)^n \wv_j = \vec{0}, \quad \text{$j = 1, \ldots, m$},
        \end{equation}
        und andererseits gibt es für alle~$\uv \in U$ ein
        \begin{equation}
            \wv = \alpha_1 \wv_1 + \cdots + \alpha_m \wv_m \quad \text{mit $\alpha_1, \ldots, \alpha_m \in \C$},
        \end{equation}
        sodass
        \begin{equation}
            \uv = B\wv = \alpha_1 B\wv_1 + \cdots + \alpha_m B\wv_m = \alpha_1 \uv_1 + \cdots + \alpha_m \uv_m.
        \end{equation}
        
        Wir können die zu beweisende Aussage also per Induktion über~$n$ zeigen.
        Der Induktionsanfang $n=1$ ist dabei trivial.
    \end{proof}
    
    \subsection{Proposition~\ref{prop:hau_indep}}
    
    \begin{proof}
        Seien $\lambda_1, \ldots, \lambda_m$ die verschiedenen Eigenwerte von~$A$ und $\vv_1 \in \Hau_A(\lambda_1), \ldots, \vv_m \in \Hau_A(\lambda_m)$ zugehörige Hauptvektoren.
        Es gelte
        \begin{equation}
            \label{eq:prop_hau_indep_pow_indep}
            \alpha_1\vv_1 + \cdots + \alpha_m\vv_m = \vec{0} \quad \text{für $\alpha_1, \ldots, \alpha_m \in \C$}.
        \end{equation}
        Wir müssen zeigen, dass~$\alpha_1 = \cdots = \alpha_m = 0$.
        
        Sei~$k_1 \in \N$ die Stufe von~$\vv_1$, d.\,h. die kleinste natürliche Zahl, sodass $(A - \lambda_1 I)^{k_1} \vv_1 = \vec{0}$.
        Dann ergibt eine Multiplikation von~\eqref{eq:prop_hau_indep_pow_indep} mit $(A - \lambda_1 I)^{k_1-1} (A - \lambda_2 I)^n \cdots (A - \lambda_m I)^n$, dass
        \begin{equation}
            \label{eq:hau_indep_mult_powers}
            \alpha_1 (A - \lambda_1 I)^{k_1-1} (A - \lambda_2 I)^n \cdots (A - \lambda_m I)^n \vv_1 = \vec{0}.
        \end{equation}
        Wenn wir jeden Term in $(A - \lambda_2 I)^n \cdots (A - \lambda_m I)^n$ mittels des Binomischen Lehrsatzes zu
        \begin{equation}
            \begin{split}
                (A - \lambda_j I)^n & = ((A - \lambda_1 I) + (\lambda_1 - \lambda_j) I)^n                                                 \\
                                    & = \sum_{k=0}^n \begin{pmatrix}n \\ k\end{pmatrix} (A - \lambda_1 I)^k (\lambda_1 - \lambda_j)^{n-k}
            \end{split}
        \end{equation}
        umformen, erhalten wir in~\eqref{eq:hau_indep_mult_powers} eine Summe aus Termen.
        Bis auf den Term
        \begin{equation}
            (\lambda_1 - \lambda_2)^n \cdots (\lambda_1 - \lambda_m)^n
        \end{equation}
        enthält jeder dieser Terme eine Potenz von $A - \lambda_1 I$, die kombiniert mit $(A - \lambda_1 I)^{k_1-1} \vv_1$ den Nullvektor ergibt.
        
        Die Gleichheit in~\eqref{eq:hau_indep_mult_powers} wird also zu
        \begin{equation}
            \alpha_1 (\lambda_1 - \lambda_2)^n \cdots (\lambda_1 - \lambda_m)^n (A - \lambda_1 I)^{k_1-1} \vv_1 = \vec{0}.
        \end{equation}
        Da $(A - \lambda_1 I)^{k_1-1} \vv_1 \not= \vec{0}$ und alle Eigenwerte verschieden sind, folgt $\alpha_1 = 0$.
        
        Für $\alpha_2, \ldots, \alpha_m$ können wir analog vorgehen, also gilt $\alpha_1 = \cdots = \alpha_m = 0$.
    \end{proof}
    
    \subsection{Proposition~\ref{prop:hau_eig}}
    
    \begin{proof}
        Sei $\lambda' \in \sigma(B)$ (Satz~\ref{thm:ev_exist}) und $\wv \in \Eig_B(\lambda')$ mit $\wv \not= \vec{0}$.
        
        Zunächst gilt
        \begin{equation}
            AV\wv = VB\wv = \lambda' V\wv
        \end{equation}
        und somit
        \begin{equation}
            (A - \lambda I) V\wv = (\lambda' - \lambda) V\wv.
        \end{equation}
        Da $V\wv \in \Hau_A(\lambda)$, erhalten wir damit
        \begin{equation}
            (\lambda' - \lambda)^n V\wv = (A - \lambda I)^n V\wv = \vec{0}.
        \end{equation}
        Da $\wv \not= \vec{0}$ und somit auch $V\wv \not= \vec{0}$, folgt $(\lambda' - \lambda)^n = 0$ und schließlich $\lambda' = \lambda$.
    \end{proof}
    
    \subsection{Proposition~\ref{prop:jord_chain_indep}}
    
    \begin{proof}
        Seien $\vv_1, \ldots, \vv_\ell \in \Hau_A(\lambda)$ eine Jordan-Kette zu~$\lambda$, wobei $\ell \leq L$ die Kettenlänge bezeichnet, und $\alpha_1, \ldots, \alpha_\ell \in \C$, sodass
        \begin{equation}
            \alpha_1 \vv_1 + \cdots + \alpha_\ell \vv_\ell = \vec{0}.
        \end{equation}
        
        Per Konstruktion gilt dann
        \begin{equation}
            \alpha_1 (A - \lambda I)^{\ell-1} \vv_\ell + \cdots + \alpha_{\ell-1} (A - \lambda I) \vv_\ell + \alpha_\ell \vv_\ell = \vec{0}.
        \end{equation}
        Eine Multiplikation dieser Gleichung mit $(A - \lambda I)^{\ell-1}$ ergibt
        \begin{alignat}{3}
                                 &  & \alpha_1 (A - \lambda I)^{2(\ell-1)} \vv_\ell + \cdots + \alpha_{\ell-1} (A - \lambda I)^\ell \vv_\ell + \alpha_\ell (A - \lambda I)^{\ell-1} \vv_\ell & = \vec{0}   \\
            \Leftrightarrow\quad &  & \alpha_\ell (A - \lambda I)^{\ell-1} \vv_\ell                                                                                                          & = \vec{0}.
        \end{alignat}
        Da $(A - \lambda I)^{\ell-1} \vv_\ell \not= \vec{0}$, folgt $\alpha_\ell = 0$.
        
        Für $\alpha_{\ell-1}, \ldots, \alpha_1$ können wir analog vorgehen, also gilt $\alpha_1 = \cdots = \alpha_\ell = 0$.
    \end{proof}
    
    \subsection{Proposition~\ref{prop:jord_chain_inv}}
    
    \begin{proof}
        Sei $\vv_1, \ldots, \vv_\ell$ eine Jordan-Kette zu~$\lambda$, d.\,h. insbesondere
        \begin{equation}
            A\vv_1 = \lambda \vv_1
            \quad \text{und} \quad
            A\vv_k = \vv_{k-1} + \lambda \vv_k \quad \text{für $k = 2, \ldots, \ell$},
        \end{equation}
        und
        \begin{equation}
            \vv \defas \alpha_1 \vv_1 + \cdots + \alpha_\ell \vv_\ell \quad \text{für $\alpha_1, \ldots, \alpha_\ell \in \C$}
        \end{equation}
        eine Linearkombination von $\vv_1, \ldots, \vv_\ell$.
        Dann gilt
        \begin{align}
            A\vv & = \alpha_1 A\vv_1 + \cdots + \alpha_\ell A\vv_\ell                                                                   \\
                 & = \alpha_1 \lambda \vv_1 + \alpha_2 (\vv_1 + \lambda \vv_2) + \cdots + \alpha_\ell (\vv_{\ell-1} + \lambda \vv_\ell) \\
                 & = \tilde{\alpha}_1 \vv_1 + \cdots + \tilde{\alpha}_\ell \vv_\ell
        \end{align}
        für Koeffizienten $\tilde{\alpha}_1, \ldots, \tilde{\alpha}_\ell \in \C$, wobei
        \begin{equation}
            \tilde{\alpha}_k \defas \alpha_k \lambda + \alpha_{k+1} \quad \text{für $k = 1, \ldots, \ell-1$}
            \quad \text{und} \quad
            \tilde{\alpha}_\ell \defas \alpha_\ell \lambda.
        \end{equation}
        Also ist~$A\vv$ wieder eine Linearkombination von $\vv_1, \ldots, \vv_\ell$.
    \end{proof}
    
\end{appendices}

\section*{Danksagung}

Mein herzlicher Dank geht an die beiden Korrekturleser Benedikt Zönnchen (Hochschule München) und Nicolai Palm (LMU München), die mit ihrem wachen Geist und kritischen Auge zum Feinschliff dieses Artikels beigetragen haben.

Zu guter Letzt möchte ich mich noch bei meinen zwischenzeitlichen Gastgebern Herr und Frau Bürkl dafür bedanken, dass sie mir bei interessanten und interessierten Abendgesprächen die Gelegenheit gegeben haben, meiner Hingabe an die Schönheit und Eleganz mathematischer Strukturen hin und wieder Ausdruck zu verleihen.


\begin{thebibliography}{1}

\bibitem{axler1995down}
S.~Axler.
\newblock Down {W}ith {D}eterminants!.
\newblock {\em Am. Math. Mon.}, 102(2):139--154, 1995.

\bibitem{axler2024linear}
S.~Axler.
\newblock {\em Linear {A}lgebra {D}one {R}ight}.
\newblock Springer Nature, Cham, 2024.

\end{thebibliography}
\end{document}